\documentclass[11pt]{article}

\usepackage[a4paper,margin=2.5cm]{geometry}

\usepackage{amsmath,amssymb,amsfonts,amsthm,mathtools}

\usepackage{graphicx}
\usepackage{float}
\usepackage{booktabs}
\usepackage{multirow}
\usepackage{array}
\usepackage{graphicx}%
\usepackage{multirow}%
\usepackage{amsmath,amssymb,amsfonts}%
\usepackage{amsthm}%
\usepackage{mathrsfs}%
\usepackage[title]{appendix}%
\usepackage{xcolor}%
\usepackage{textcomp}%
\usepackage{manyfoot}%
\usepackage{booktabs}%
\usepackage{algorithm}%
\usepackage{algorithmicx}%
\usepackage{algpseudocode}%
\usepackage{listings}%
\usepackage{soul}
\usepackage{tikz}
\usepackage{enumitem}
\usepackage{setspace}
\usepackage[hidelinks]{hyperref}

\newtheorem{theorem}{Theorem}[section]
\newtheorem{lemma}[theorem]{Lemma}
\newtheorem{proposition}[theorem]{Proposition}
\newtheorem{corollary}[theorem]{Corollary}

\theoremstyle{definition}
\newtheorem{definition}[theorem]{Definition}
\newtheorem{example}[theorem]{Example}
\newtheorem{remark}[theorem]{Remark}

\title{\Large\bfseries
Directed Extended Zero Divisor Graphs Of Non-Commutative Semigroups}

\author{
	Defne Somer$^{1}$,
	Didem Ye\c{s}il$^{1,*}$,
	\.{I}smail Naci Cang\"ul$^{2}$
}

\date{}

\begin{document}
	
	\maketitle
	
	\begin{center}
		\small
		
		$^{1}$Department of Mathematics,\\
		\c{C}anakkale Onsekiz Mart University,\\
		\c{C}anakkale, T\"urkiye
		
		\vspace{0.4cm}
		
		$^{2}$Department of Mathematics,\\
		Bursa Uluda\u{g} University,\\
		Bursa, T\"urkiye
		
		\vspace{0.5cm}
		
		\texttt{defne.somer@comu.edu.tr}
		
		\texttt{dyesil@comu.edu.tr}$^{*}$
		
		\texttt{cangul@uludag.edu.tr}
		
		\vspace{0.3cm}
		
		$^{*}$Corresponding author.
		
	\end{center}
	
	\begin{abstract}
	In this paper, the directed extended zero-divisor graph $\overrightarrow{\Gamma}_E(S)$ of a non-commutative semigroup $S$ is introduced and the relationships between $\overrightarrow{\Gamma}_E(S)$ and the zero-divisor graph $\overrightarrow{\Gamma}(S)$ of $S$ are examined. The fact that $\overrightarrow{\Gamma}(S)$ is a spanning subgraph of $\overrightarrow{\Gamma}_E(S)$ is proved. Necessary and sufficient conditions for the equality $\overrightarrow{\Gamma}_E(S)=\overrightarrow{\Gamma}(S)$ are obtained. Moreover, the conditions under which $\overrightarrow{\Gamma}_E(S)$ contains a cycle were established. In addition, conditions under which the diameter of $\overrightarrow{\Gamma}_E(S)$ is strictly smaller than that of $\overrightarrow{\Gamma}(S)$ are characterized. The vertices that are adjacent to or adjacent from all other vertices are investigated. The results are established by using several algebraic notions such as nilpotent elements, their nilpotency indices, and annihilator sets of elements.
	\end{abstract}
	
	\bigskip
	
	\noindent\textbf{Keywords:}
Directed extended zero-divisor graph; Directed zero-divisor graph; Zero divisors; Semigroups.
	
	\medskip
	
	\noindent\textbf{MSC 2020:}
	20M10, 05C25
	
\section{Introduction}

In his paper titled ``Coloring of commutative rings", Beck (1988, \cite{beck}) initiated the study on zero-divisor graphs of rings which have zero divisors of $R$ (denoted by $Z(R)$) together with $0$ as their vertex set and two distinct vertices $x,y$ are adjacent if and only if $xy=0$. Given that the definition is based upon commutative rings, the graph defined in Beck's work was undirected and simple. In \cite{beck}, Beck computed chromatic numbers and clique numbers of zero-divisor graphs of commutative rings, and searched for the graph colorings. 

However, aforementioned graph was not named as zero-divisor graph until Anderson and Livingston enhanced the definition in \cite{dfafsl}. In their graph, $R$ is a commutative ring with unity, the set of vertices is $Z(R)-\{0\}$, and distinct $x,y$ are adjacent vertices if and only if $xy=0$.  This definition came to be the most used definition for zero-divisor graphs of commutative rings. Research on zero-divisor graphs of commutative rings expanded with researches such as \cite{demeyer2, vnr, lag, mulay1, redmond1, monius}.

Predictably, the research field did not adhere to zero-divisor graphs of commutative rings with identity. In \cite{redmond}, Redmond proposed multiple approaches for defining the zero-divisor graph of a non-commutative ring. Among these formulations, one defines the zero-divisor graph of a non-commutative ring $R$ as the graph with vertex set $Z(R)-\{0\}$ and $x \to y$ is a directed arc between distinct vertices if and only if $xy=0$. This definition omits loops and multi arcs as well, making the directed zero-divisor graph simple. The works followed this idea are \cite{behboodi, akbari2}. 

In parallel with the development of zero-divisor graphs of rings, the investigation was extended to commutative semigroups in 2002 when \cite{demeyer3} initiated the research on zero-divisor graphs of commutative semigroups, defined analogously to those of rings. The zero-divisor graph of a commutative semigroup $S$ is denoted by $\Gamma(S)$. Subsequent contributions include \cite{demeyer4, wulu, bender}. In \cite{wright}, Wright carried over the concept of the directed zero-divisor graph of rings to semigroups, expressed as $\overrightarrow{\Gamma}(S)$. In addition to $\overrightarrow{\Gamma}(S)$, Wright studied  $\overline{\Gamma}(S)$, the underlying (undirected) graph of $\overrightarrow{\Gamma}(S)$, and $\overrightarrow{\Gamma}_0(S)$, the larger digraph with loops (i.e. for $x \in Z(S)-\{0\}$, $x \to x$ is a loop iff $x^2=0$).

Several variants have also been introduced. For example, as mentioned in \cite{demeyer3}, for a commutative semigroup $S$ and its ideal $I$, the graph $\Gamma((S,I))$ has the elements $s_1,s_2 \in S - I$ such that $s_1 s_2 \in I$. Distinct $s_1$ and $s_2$ are adjacent if $s_1 s_2 \in I$. In \cite{flores}, Beaugris et al. presented the notion of weak zero divisor of finite commutative rings. Vertices are non-zero elements of the ring, while distinct $x,y \in R-\{0\}$ are adjacent if and only if there exists a positive integer $n \in \mathbb{Z}^+$ such that $(xy)^n =0$. Moreover, in \cite{extended}, Bennis et al. defined the extended zero-divisor graph of commutative rings with unity (also denoted by $\overline{\Gamma}(R)$). In the extended zero-divisor graph, the set of vertices is non-zero zero divisors, whereas distinct $x,y$ are adjacent if and only if there exist positive integers $m,n$ such that $x^m y^n = 0$ with both $x^m$ and $y^n$ are non-zero. The zero-divisor graph of a ring $R$ is always the subgraph of its extended zero-divisor graph, \cite{extended}. This particular definition is one of the primary concerns of this paper. Furthermore, in \cite{dey}, the authors proposed a new definition for zero-divisor graphs of a $\Gamma$-semigroup where distinct zero divisors $x,y$ are adjacent if and only if there exists an $\alpha \in \Gamma$ such that $x \alpha y=0$. In their paper, the zero-divisor graph of $\mathbb{Z}_n$ with $\Gamma = \mathbb{Z}_n$ is investigated. The regular zero-divisor graph is a subgraph of this graph. 

Additionally, zero-divisor graphs have been studied not only for rings and semigroups but also for other algebraic structures. Such as \cite{semiring, dolzan} where the authors, in addition to rings, studied the zero-divisor graphs of semirings. Zero-divisor graphs of modules are studied in \cite{baziar}. In \cite{cannon}, Cannon et al. investigated the zero-divisor graphs of nearrings and semigroups.

Furthermore, in addition to zero-divisor graphs, many other graphs associated with algebraic structures have been introduced and extensively studied in the literature. For instance, the commuting graph of a semigroup is a graph whose vertices are the non-central elements of the semigroup and distinct vertices $x,y$ are adjacent if and only if they commute, \cite{ara}. The notion of commuting graphs was originally introduced in the context of groups, \cite{segev}, and has also been studied for rings and semigroups. Moreover, commuting graphs have been investigated for various special classes of semigroups, groups, and rings, \cite{ paulista, rom, torktaz, dorbidi}. 

Another field in which graph theory has found extensive applications is chemistry, where various graph-based topological indices are introduced and investigated to quantify structural properties of graphs. For instance, the atom-bond connectivity energy of a graph has been studied together with several bounds and values for different graph classes, \cite{reddy}. First Zagreb and second Zagreb indices are graph indices that have been studied extensively, \cite{gunes, zagreb}. In 2021, Nikmehr et al. defined weakly zero-divisor graph of a commutative ring, \cite{nikme}. In \cite{rehman}, Rehman et al. computed the Zagreb indices of the weakly zero-divisor graph of the ring $\mathbb{Z}_p \times \mathbb{Z}_t \times \mathbb{Z}_s$ where $p,t,s$ are not-necessarily distinct primes that are greater than 2, and in \cite{semil}, Semil et al. presents some findings related to the first Zagreb index of zero divisor graph for the commutative ring $\mathbb{Z}_{p^k}$, where $p$ is a prime number. These two papers bring together two different aspects of graph theory. On one hand, they involve zero-divisor graphs arising from algebraic structures, and on the other hand, they consider graph indices, which are studied in chemical graph theory. In this area, these indices are usually named as molecular descriptors. In this sense, they establish a connection between algebra and chemistry. 

In this paper, the definition of the extended zero-divisor digraph of a non-commutative semigroup $S$, denoted as $\overrightarrow{\Gamma}_E(S)$, is given analogously to those of commutative rings from \cite{extended}. It has been shown that $\overrightarrow{\Gamma}_E(S)$ is a supergraph of $\overrightarrow{\Gamma}(S)$, the zero-divisor digraph of $S$. The condition under which $\overrightarrow{\Gamma}(S)
= \overrightarrow{\Gamma}_E(S)$ has been examined and characterized. Since the diameter and girth of the supergraph cannot be greater than those of its subgraphs, conditions in which the diameter and girth of $\overrightarrow{\Gamma}_E (S)$ are lesser than those of $\overrightarrow{\Gamma}(S)$ are determined. The effects of nilpotents and annihilator sets of elements on $\overrightarrow{\Gamma}_E(S)$ are studied, particularly their influence on parameters such as connectivity, diameter and girth are determined. 

\section{Preliminaries}\label{prel}
In this section, the definitions used throughout the paper are recalled.
\subsection{Preliminaries on semigroups} 

\begin{definition}\cite{radical}
	Let $S$ be a semigroup and $A \subseteq S$. The set $\sqrt{A} = \{x \in S: x^n \in A, \text{ } \exists n \in \mathbb{Z}^+ \}$ is called the radical of $A$.
\end{definition}

\begin{definition}\cite{howie}
	Let $S$ be a semigroup with $0$ and $0 \neq s \in S$. If there exists a positive integer $n \geq 2$ such that $s^n = 0$, then $s$ is called a nilpotent element. $Nil(S)$ denotes the set of nilpotent elements of $S$.
\end{definition}

\begin{remark}
	Throughout this paper, for an element $s \in Nil(S)$, $n_s$ will denote the smallest positive integer such that $s^{n_s}=0$; that is $s^{n_s - 1}\neq 0$.	
\end{remark}

\begin{definition}\cite{howie}
	Let $S$ be a semigroup and $e \in S$. If $e^2=e$, then $e$ is called an idempotent element. The set of idempotents of $S$ is denoted as $E(S)$.
\end{definition}

\begin{definition} \cite{wright}
	Let $S$ be a semigroup with $0$ and assume $a,b \in S$ are non-zero. If $ab=0$, then $a$ is called a left zero divisor and $b$ is called a right zero divisor. If for a $s \in S$, there exist $a,b \neq 0$ such that $as = 0$ and $sb= 0$, then $s$ is called a two-sided zero divisor. The set $Z(S)$ denotes the set of zero divisors of $S$, namely, if $x \in Z(S)$ then $x$ might be a right, left or a two-sided zero divisor. 
\end{definition}

\begin{remark}
	Let $S$ be a semigroup and $A,B \subseteq S$. Then $AB=\{ab : a\in A, b\in B\}$.
\end{remark}

\begin{definition}\cite{trans}
	Let $S$ be a semigroup with $0$. For a $x \in S$, the set $Ann(x)=\{s \in S:sx = 0 = xs\}$ is called the set of annihilators of $x$.
\end{definition}

\begin{definition}\cite{semiring}
	Let $S$ be a non-commutative semigroup with $0$. For a $x \in S$, the set $Ann_l(x)= \{s \in S:sx=0\}$ is called the set of left annihilators and $Ann_r(x)=\{s \in S:xs=0\}$ is called the set of right annihilators.
\end{definition}

\begin{remark}\label{kap}
	Let $S$ be a non-commutative semigroup with $0$. Then, $Ann_l(x) \cap Ann_r(x) = Ann(x)$.
\end{remark}

\begin{lemma}\label{kapsar}
	Let $S$ be a non-commutative semigroup with $0$, $x \in S - \{0\}$, and $n \in \mathbb{Z}^{+}$. If $x \in Nil(S)$ then, $Ann(x) \subseteq Ann (x^n)$ for all $n \geq 2$.
\end{lemma}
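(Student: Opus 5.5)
The inclusion follows directly from associativity in $S$; the nilpotency hypothesis is not actually used in the argument.

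Fix $s \in Ann(x)$, so $sx = 0 = xs$. The goal is to show $s x^n = 0 = x^n s$ for every $n \geq 2$. Since $Ann(x^n)$ is defined as the set of two-sided annihilators, this places $s$ in $Ann(x^n)$.

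For the left product, associativity gives
\[
s x^n = (sx)x^{n-1} = 0\cdot x^{n-1} = 0.
\]
The last equality uses only that $0$ is the zero of $S$, so $0t = t0 = 0$ for all $t \in S$. Symmetrically,
\[
x^n s = x^{n-1}(xs) = x^{n-1}\cdot 0 = 0.
\]
If one prefers a formal argument, a short induction on $n$ works: from $sx^{k} = 0$ we get $sx^{k+1} = (sx^k)x = 0$, and similarly on the right.

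Because $s$ was arbitrary, $Ann(x) \subseteq Ann(x^n)$ for all $n \geq 2$. The same reasoning gives the one-sided analogues $Ann_l(x)\subseteq Ann_l(x^n)$ and $Ann_r(x)\subseteq Ann_r(x^n)$. By Remark~\ref{kap}, intersecting these two inclusions gives another route to the stated result.

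There is one point to address. When $x$ is nilpotent and $n \geq n_x$, we have $x^n = 0$, so $Ann(x^n) = Ann(0) = S$. The inclusion is then trivial, and the computation above covers it anyway, so no case split is required.
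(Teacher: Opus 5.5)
Your proof is correct. The identities $sx^n=(sx)x^{n-1}=0$ and $x^ns=x^{n-1}(xs)=0$ are all that is needed, and you are right that nilpotency plays no role.

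The paper states this lemma without proof, treating it as immediate, so there is no argument of theirs to compare routes with. Your more general version, valid for every $x\in S$ and every $n\ge 1$, is in fact what the paper relies on later. In Theorem~\ref{g}, Theorem~\ref{gg}, and the proposition for $Nil(S)=\{0\}$, the inclusions $Ann(y)\subseteq Ann(y^2)$, $Ann_l(y)\subseteq Ann_l(y^2)$ and $Ann_r(y)\subseteq Ann_r(y^2)$ are invoked for non-nilpotent $y$. The lemma as stated does not cover those cases, but your argument does.
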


\subsection{Preliminaries on graphs}
\begin{definition}\label{graph} \cite{digraph}
	A directed graph (or a digraph) $D$ consists of two sets, the set of vertices denoted by $V(D)$ and the set of arcs denoted by $E(D)$. Elements of $V(D)$ are called vertices and elements of $E(D)$ are called arcs. $E(D)$ in fact consists of ordered pairs of vertices. 
\end{definition}

\begin{definition} \cite{wright, digraph} 
	Let $D$ be a digraph. If $(u,v)$ is an arc of $D$, then $u$ is said to be adjacent to $v$ and $v$ is adjacent from $u$, and is denoted by $u \to v$. If $e$ is the arc between $u$ and $v$; then $u$ is called the tail of $e$, while $v$ is called the head of $e$.
\end{definition}

\begin{definition}\cite{handbook}
	Let $G$ be an undirected graph and $n \in \mathbb{Z}^+$. A walk 
	\begin{align}\label{walk}
		W = v_0, e_1, v_1, \cdots, e_n, v_n
	\end{align}
	in $G$, is an alternating sequence of edges and vertices and for all $i \in {1,2,\cdots,n}$ the vertices $v_{i-1}$ and $v_{i}$ are ends of the edge $e_i$. 
	
	If $G$ is a digraph, then $P$ is a sequence of edges and vertices where for all $i \in {1,2,\cdots,n}$, the vertex $v_{i-1}$ is the tail of the arc $e_i$  while $v_{i}$ is the head of the arc $e_i$. 
\end{definition}

\begin{definition} \cite{handbook}
	In the walk $W$ given in (\ref{walk}), $v_0$ is called the initial vertex, $v_n$ is called the terminal vertex, and all other vertices are called internal vertices.
\end{definition}

\begin{definition} \cite{handbook}
	Let $W$ be the walk given in (\ref{walk}). If no edge and no internal vertex occurs more than once, then $W$ is called a path. If $v_0 = v_n$, then $W$ is called a cycle.
\end{definition}

\begin{remark}
	Length of a walk (or a path, or a cycle) is equal to the number of occuring edges.
\end{remark}

\begin{definition} \cite{rd}
	Let $G$ be a graph. The length of the shortest cycle is called the girth of $G$ and is denoted as $Girth(G)$. If $G$ does not contain a cycle, then $G$ is called a tree and $Girth(G)=\infty$.
\end{definition}

\begin{definition}\cite{digraph}
	Let $x,y$ be vertices of a digraph $D$. The distance between $x$ and $y$, denoted by $dist(x,y)$, is the length of the shortest directed path from $x$ to $y$.
\end{definition}

\begin{definition}\cite{digraph}
	Diameter of a digraph $D$, denoted by $Diam(D)$, is equal to the greatest distance between any vertices in $D$. Namely
	$$Diam(G)=max\{dist(x,y):x,y \in V(D)\}$$
\end{definition}

\begin{definition}\cite{handbook}
	A digraph is called (strongly) connected if there exists a directed path between any pair of its vertices. 
\end{definition}

\begin{definition} \cite{handbook, digraph}
	An undirected graph $G$ is complete if for any $u,v \in V(G)$, $\{u,v\}$ is an edge. Similarly, a digraph $D$ is said to be complete if for any $u,v \in V(D)$, $u \to v$ is an arc.
\end{definition}

\begin{definition} \cite{digraph}
	Let $D$ be a digraph. If $v \to u$ is also an arc for every $u \to v$, then $D$ is called a symmetric digraph.
\end{definition}

\begin{remark}\cite{digraph}
	Essentially, any undirected graph can be considered as a symmetric digraph. 
\end{remark}

\begin{definition}\cite{rd} 
	Let $G=(V,E)$ and $H=(W,F)$ be two graphs. If $W \subseteq V$ and $F \subseteq E$, then $H$ is said to be a subgraph of $G$ ($G$ is said to be the supergraph of $H$) and this situation is denoted by $H \subseteq G$.
\end{definition}

\begin{definition} \cite{handbook}
	The trivial graph is the graph on one vertex without any edges.
\end{definition}

\begin{definition}\cite{digraph}
	Let $G=(V,E)$ and $H=(W,F)$ be two digraphs. $H$ is called a spanning subdigraph of $G$ if $H \subseteq G$ and $V=W$.
\end{definition}

\begin{remark}\label{bag}
	Let $G$ and $H$ be two digraphs such that $H$ is a spanning subdigraph of $G$. If $H$ is strongly connected, then so is $G$.
\end{remark}

\subsection{Preliminaries on zero-divisor graphs of semigroups and rings}
\begin{definition}\label{ilk}\cite{dfafsl}
	Let $R$ be a ring. The zero-divisor graph of $R$, denoted by $\overrightarrow{\Gamma}(R)$, is a simple digraph which has the set $Z(R)-\{0\}$ as its vertex set. For distinct $x,y \in Z(R)-\{0\}$,  $x \to y$ is an arc if and only if $xy =0$.
\end{definition}

Analogous to the definition of rings, the definition of the zero-divisor graph of a semigroup is as follows;

\begin{definition}\label{iki}\cite{wright}
	Let $S$ be a semigroup with $0$. The zero-divisor graph of $S$ denoted by $\overrightarrow{\Gamma}(S)$ is a simple digraph which has the set $Z(S)-\{0\}$ as its vertex set. For distinct $x,y \in Z(S)-\{0\}$, $x \to y$ is an arc if and only if $xy =0$. 
\end{definition}

\begin{remark}\cite{demeyer3}
	If $S$ is a commutative semigroup (or a commutative ring) with zero, then $x \to y$ being an arc in $\overrightarrow{\Gamma}(S)$ implies that $yx=0$ as well. Thus, the zero divisor graph of a commutative semigroup (or of a commutative ring) is undirected, and is denoted by $\Gamma(S)$.
\end{remark}

\begin{example}\cite{redmond}
	Let $S = \mathbb{Z}_{12}$. $Z(\mathbb{Z}_{12})-\{0\}=\{\overline{2}, \overline{3}, \overline{4},\overline{6},\overline{8},\overline{9},\overline{10}\}$ and Figure \ref{z12} demonstrates the zero-divisor graph of $\mathbb{Z}_{12}$. 
	
	\begin{figure}[th]
		\centerline{\begin{tikzpicture}\label{z12}
				
				\node[circle, draw] (2) at (-1,-1) {$\overline{2}$};
				\node[circle, draw] (3) at (0,1) {$\overline{3}$};
				\node[circle, draw] (4) at (-1,1) {$\overline{4}$};
				\node[circle, draw] (6) at (0,0) {$\overline{6}$};
				\node[circle, draw] (8) at (1,1) {$\overline{8}$};
				\node[circle, draw] (9) at (0,2) {$\overline{9}$};
				\node[circle, draw] (10) at (1,-1) {$\overline{10}$};
				
				\draw (2) -- (6);
				\draw (3) -- (4);
				\draw (3) -- (8);
				\draw (4) -- (6);
				\draw (4) -- (9);
				\draw (6) -- (8);
				\draw (6) -- (10);
				\draw (8) -- (9);
				
		\end{tikzpicture} }
		\vspace*{8pt}
		\caption{Zero divisor graph of $\mathbb{Z}_{12}$}
		\label{z12}
	\end{figure}
	
	Figure \ref{z12y} demonstrates the zero-divisor graph of $\mathbb{Z}_{12}$ as a symmetric digraph as in the Definition \ref{ilk}.
	
	\begin{figure}[th]
		\centerline{\begin{tikzpicture}\label{z12y}
				
				\node[circle, draw] (2) at (-1,-1) {$\overline{2}$};
				\node[circle, draw] (3) at (0,1) {$\overline{3}$};
				\node[circle, draw] (4) at (-1,1) {$\overline{4}$};
				\node[circle, draw] (6) at (0,0) {$\overline{6}$};
				\node[circle, draw] (8) at (1,1) {$\overline{8}$};
				\node[circle, draw] (9) at (0,2) {$\overline{9}$};
				\node[circle, draw] (10) at (1,-1) {$\overline{10}$};
				
				\draw[->]  (2) -- (6);
				\draw[->]  (6) -- (2);
				\draw[->]  (3) -- (4);
				\draw[->]  (4) -- (3);
				\draw[->]  (3) -- (8);
				\draw[->]  (8) -- (3);
				\draw[->]  (4) -- (6);
				\draw[->]  (6) -- (4);
				\draw[->]  (4) -- (9);
				\draw[->]  (9) -- (4);
				\draw[->]  (6) -- (8);
				\draw[->]  (8) -- (6);
				\draw[->]  (6) -- (10);
				\draw[->]  (10) -- (6);
				\draw[->]  (8) -- (9);
				\draw[->]  (9) -- (8);		
		\end{tikzpicture}}
		\vspace*{8pt}
		\caption{Directed zero divisor graph of $\mathbb{Z}_{12}$}
		\label{z12y}
	\end{figure}
\end{example}

\begin{definition}\cite{extended}
	Let $R$ be a commutative ring. Then, the simple, undirected graph where $Z(R)-\{0\}$ is the vertex set and distinct $x,y$ are adjacent if and only if there exist $m,n \in \mathbb{Z}^+$ such that $x^ny^m=0$ and $x^n,y^m \neq 0$ is called the extended zero-divisor graph of $R$.
\end{definition}

\begin{example}
	In $\mathbb{Z}_{12}$, for any $n \in \mathbb{Z}^+$,
	$$
	\begin{array}{ll}
		\overline{2}^n \in \{\overline{2},\overline{4},8\} & \quad \overline{3}^n \in \{\overline{3},\overline{9}\} \\
		\overline{4}^n = \overline{4}, & \quad \overline{6}^n \in \{\overline{0},\overline{6}\} \\
		\overline{8}^n \in \{\overline{4},\overline{8}\} & \quad \overline{9}^n = \overline{9}, \\
		\multicolumn{2}{c}{\overline{10}^n \in \{\overline{4},\overline{10}\}.} 
	\end{array}
	$$ 
	Since $\overline{10}.\overline{3}=\overline{6}\neq\overline{0}$, $\overline{10}$ and $\overline{3}$ are not adjacent in the zero-divisor graph. However $\overline{10}^2.\overline{3}=\overline{4}.\overline{3}=\overline{0}$. Thus, $\overline{10}$ and $\overline{3}$ are adjacent in the extended zero-divisor graph.
	Figure \ref{z12e} demonstrates the extended zero-divisor graph of $\mathbb{Z}_{12}$. The edges of the extended zero-divisor graph that are not edges of the zero-divisor graph are dashed.
	
	\begin{figure}[th]
		\centerline{\begin{tikzpicture}\label{z12e}
				
				\node[circle, draw] (2) at (-1,-1) {$\overline{2}$};
				\node[circle, draw] (3) at (0,1) {$\overline{3}$};
				\node[circle, draw] (4) at (-1,1) {$\overline{4}$};
				\node[circle, draw] (6) at (0,0) {$\overline{6}$};
				\node[circle, draw] (8) at (1,1) {$\overline{8}$};
				\node[circle, draw] (9) at (0,2) {$\overline{9}$};
				\node[circle, draw] (10) at (1,-1) {$\overline{10}$};
				
				\draw (2) -- (6);
				\draw[dashed] (2) -- (3);
				\draw[dashed] (2) to[bend left=6] (9);
				
				\draw (3) -- (4);
				\draw (3) -- (8);
				
				\draw (4) -- (6);
				\draw (4) -- (9);
				
				\draw (6) -- (8);
				\draw (6) -- (10);
				
				\draw (8) -- (9);
				
				\draw[dashed] (10) to[bend right=6] (9);
				\draw[dashed] (10) -- (3);

		\end{tikzpicture}}
		\vspace*{8pt}
		\caption{Extended zero divisor graph of $\mathbb{Z}_{12}$}
		\label{z12e}
	\end{figure}
\end{example}

\section{Results}\label{sec3}

\begin{definition}
	Let $S$ be a non-commutative semigroup with $0$. The extended zero divisor of $S$, denoted $\overrightarrow{\Gamma}_E (S)$, is a graph with $Z(S)-\{0\}$ as the set of vertices. $x \to y$ is an edge of $\overrightarrow{\Gamma}_E (S)$ if and only if there exist positive integers $n$ and $m$ such that $x^n y^m = 0$ with both $x^n \neq 0$ and $y^m \neq 0$.
\end{definition}

\begin{proposition}
	Let $S$ be a non-commutative semigroup with $0$. $\overrightarrow{\Gamma}(S)$ is a spanning subdigraph of 	$\overrightarrow{\Gamma}_E (S)$.
\end{proposition}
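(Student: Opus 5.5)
We verify the two conditions in the definition of a spanning subdigraph: the vertex sets coincide, and every arc of $\overrightarrow{\Gamma}(S)$ is an arc of $\overrightarrow{\Gamma}_E (S)$.

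\textbf{Vertex sets.} By Definition \ref{iki}, $\overrightarrow{\Gamma}(S)$ has vertex set $Z(S)-\{0\}$. By the definition just given, $\overrightarrow{\Gamma}_E (S)$ has the same vertex set. So $V(\overrightarrow{\Gamma}(S)) = V(\overrightarrow{\Gamma}_E (S))$, and it only remains to compare arcs.

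\textbf{Arcs.} Take an arc $x \to y$ of $\overrightarrow{\Gamma}(S)$. Then $x,y$ are distinct elements of $Z(S)-\{0\}$ with $xy=0$. Choose the exponents $n=m=1$. Then $x^n y^m = xy = 0$, and $x^n = x \neq 0$, $y^m = y \neq 0$ because $x$ and $y$ are vertices, hence nonzero. These are exactly the conditions required for $x \to y$ to be an arc of $\overrightarrow{\Gamma}_E (S)$. Therefore $E(\overrightarrow{\Gamma}(S)) \subseteq E(\overrightarrow{\Gamma}_E (S))$.

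Combining the two parts, $\overrightarrow{\Gamma}(S) \subseteq \overrightarrow{\Gamma}_E (S)$ with equal vertex sets, so $\overrightarrow{\Gamma}(S)$ is a spanning subdigraph of $\overrightarrow{\Gamma}_E (S)$.

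There is no substantive obstacle. The only point to watch is that the definition of $\overrightarrow{\Gamma}_E (S)$ does not state distinctness of $x$ and $y$ explicitly. This does not affect the inclusion, since arcs of $\overrightarrow{\Gamma}(S)$ already join distinct vertices. We take $\overrightarrow{\Gamma}_E (S)$ to be simple, as in the commutative case.
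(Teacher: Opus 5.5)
Your proof is correct and is the same argument as the paper's: both graphs have vertex set $Z(S)-\{0\}$, and an arc $x \to y$ of $\overrightarrow{\Gamma}(S)$ (i.e.\ $xy=0$) is an arc of $\overrightarrow{\Gamma}_E(S)$ by taking $n=m=1$. You are slightly more explicit than the paper in checking that $x^n = x \neq 0$ and $y^m = y \neq 0$, which the paper leaves implicit.
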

\begin{proof}
	The vertex set of both graphs is $Z(S)-\{0\}$. Let now $x,y \in Z(S)-\{0\}$ and $x \to y$ be an edge of $\overrightarrow{\Gamma}(S)$. Then $xy=0$. Thus, $x \to y$ is also an edge of $\overrightarrow{\Gamma}_E(S)$.
\end{proof}

\begin{lemma}\label{tumevar}
	Let $S$ be a non-commutative semigroup with $0$, $x \in S - \{0\}$, and $n \in \mathbb{Z}^{+}$. If $x \notin Nil(S)$, then 
	
	\begin{align}
		Ann_l(x^2) = Ann_l(x) &\iff Ann_l(x^n) = Ann_l(x), \quad \forall n \geq 2, \label{sol}\\
		Ann_r(x^2) = Ann_r(x) &\iff Ann_r(x^n)= Ann_r(x), \quad \forall n \geq 2. \label{sag} 
	\end{align}
\end{lemma}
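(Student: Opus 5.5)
The backward direction of each equivalence is immediate. If $Ann_l(x^n)=Ann_l(x)$ holds for all $n\ge 2$, we simply take $n=2$; the same applies to (\ref{sag}). So the content is the forward direction. I will prove (\ref{sol}) by induction on $n$, and (\ref{sag}) follows by the mirror argument, reading products in reverse order.

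First I record the general inclusion $Ann_l(x)\subseteq Ann_l(x^n)$ for every $n\ge 1$, which needs no hypothesis. If $sx=0$, then $sx^n=(sx)x^{n-1}=0$. Symmetrically, if $xs=0$, then $x^ns=x^{n-1}(xs)=0$, so $Ann_r(x)\subseteq Ann_r(x^n)$. With this in hand, proving equality reduces to proving the reverse inclusion $Ann_l(x^n)\subseteq Ann_l(x)$.

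Now assume $Ann_l(x^2)=Ann_l(x)$ and, inductively, $Ann_l(x^{n})=Ann_l(x)$ for some $n\ge 2$. Let $s\in Ann_l(x^{n+1})$, so $sx^{n+1}=0$. The plan is to regroup this as $(sx^{n-1})x^2=0$. This gives $sx^{n-1}\in Ann_l(x^2)=Ann_l(x)$, hence $sx^{n-1}x=sx^n=0$. So $s\in Ann_l(x^n)=Ann_l(x)$ by the induction hypothesis, which closes the induction. For the right version, from $x^{n+1}s=0$ I regroup as $x^2(x^{n-1}s)=0$, so $x^{n-1}s\in Ann_r(x^2)=Ann_r(x)$. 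This gives $x^ns=0$, and the induction hypothesis finishes it.

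The only delicate point is the choice of regrouping in the inductive step. Peeling off a single factor of $x$ does not work, because the hypothesis only speaks about $x^2$; we must peel off $x^2$ and then use associativity. I note that the hypothesis $x\notin Nil(S)$ is not actually used in this argument. Its role is presumably to keep the statement meaningful: if $x$ were nilpotent, then $Ann_l(x^n)=S$ for large $n$, and the statement would force $Ann_l(x)=S$. The argument itself goes through regardless.
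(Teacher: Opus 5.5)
Your proof is correct and is essentially the paper's induction. The only difference is the order in which the two hypotheses are used: the paper writes $yx^{k+1}=(yx)x^{k}$, applies the induction hypothesis to get $yx\in Ann_l(x)$, i.e.\ $yx^2=0$, and only then invokes $Ann_l(x^2)=Ann_l(x)$, whereas you absorb $x^{n-1}$ into $s$ and use the base hypothesis first.

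Your side remark that peeling off a single factor of $x$ ``does not work'' is not accurate, since peeling it off on the left is exactly the paper's step. Your explicit check of $Ann_l(x)\subseteq Ann_l(x^n)$, which the paper leaves implicit, is correct, as is your observation that $x\notin Nil(S)$ is never used.
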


\begin{proof} 
	Let $Ann_l(x^2) = Ann_l(x)$. The proof of Eq. (\ref{sol}) will be carried out by induction on $n$.  
	\begin{align*}
		y \in Ann_l (x^3) &\Rightarrow yx^3=0 \\
		&\Rightarrow yx \in Ann_l(x^2) = Ann_l(x) \\
		&\Rightarrow (yx)x=yx^2=0 \\
		&\Rightarrow y \in Ann_l(x^2) = Ann_l(x) \\
		&\Rightarrow yx = 0 \\
		&\Rightarrow y \in Ann_l(x).
	\end{align*}
	Hence, $Ann_l(x^3)=Ann_l(x)$. Assume next $Ann_l(x^k) = Ann_l(x)$ for a $k \geq 1$ and $y \in Ann_l(x^{k+1})$.
	\begin{align*}
		y \in Ann_l (x^{k+1}) &\Rightarrow yx^{k+1}=0 \\
		&\Rightarrow yx \in Ann_l(x^k) = Ann_l(x) \\
		&\Rightarrow (yx)x=yx^2=0 \\
		&\Rightarrow y \in Ann_l(x^2) = Ann_l(x) \\
		&\Rightarrow yx = 0 \\
		&\Rightarrow y \in Ann_l(x).
	\end{align*}
\end{proof}
Therefore, $Ann_l(x^n) = Ann_l(x)$ for all $n \in \mathbb{Z}^+$. The converse is straightforward since $Ann_l(x^n) = Ann_l(x)$ for all $n \in \mathbb{Z}^+$ and as $2 \in \mathbb{Z}^+$. 

A similar approach can be used to prove Eq. (\ref{sag}).
\begin{remark}
	Let $S$ be a non-commutative semigroup with $0$ and $x$ be a nilpotent element in $S$ such that $n_x \geq 3$. Then $n_x - 1 \geq 2$. If $Ann(x^{n_x - 1})=Ann(x)$, then $x \in Ann(x)$, hence $x^2 = 0$. This is a contradiction since $n_x \geq 3$. Consequently, $Ann(x) \neq Ann(x^{n_x - 1})$. 
\end{remark}

\begin{theorem}\label{g}
	Let $S$ be a non-commutative semigroup with $0$. If $\overrightarrow{\Gamma}_E (S) = \overrightarrow{\Gamma}(S)$ then,
	\begin{enumerate}
		\item $Ann(y^2) = Ann(y)$ for all non-zero $y \in Z(S)-Nil(S)$. \\
		\item  If $Nil(S) \neq \{0\}$, for all $x \in Nil(S)-\{0\}$, $n_x \leq 3$ and  if $n_x = 3$ then $Ann(x^{n_x-1})-Ann(x) = \{x\}$.
	\end{enumerate}
\end{theorem}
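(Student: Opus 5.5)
The plan is to argue by contraposition in each part. Whenever a condition fails, I will exhibit two distinct vertices $u,v\in Z(S)-\{0\}$ with $u^n v^m=0$, $u^n\neq 0$, $v^m\neq 0$ for suitable $n,m$, while $uv\neq 0$. Then $u\to v$ is an arc of $\overrightarrow{\Gamma}_E(S)$ but not of $\overrightarrow{\Gamma}(S)$, contradicting $\overrightarrow{\Gamma}_E(S)=\overrightarrow{\Gamma}(S)$. Each time I must also check three routine side conditions: both elements are non-zero, both are zero divisors, and they are distinct.

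\textbf{Part 1.} Fix a non-zero $y\in Z(S)-Nil(S)$. The inclusion $Ann(y)\subseteq Ann(y^2)$ is immediate, since $sy=0=ys$ gives $sy^2=0=y^2s$. Suppose $s\in Ann(y^2)-Ann(y)$. Then $sy\neq 0$ or $ys\neq 0$; in particular $s\neq 0$.
\begin{itemize}
\item If $sy\neq 0$: since $y\notin Nil(S)$ we have $y^2\neq 0$, and $sy^2=0$. Hence $s$ is a non-zero left zero divisor, so $s\in Z(S)-\{0\}$. Also $s\neq y$, because $y^3\neq 0$. Taking $n=1$, $m=2$ gives the arc $s\to y$ in $\overrightarrow{\Gamma}_E(S)$, so equality of the graphs forces $sy=0$, a contradiction.
\item If $ys\neq 0$: the symmetric argument with $y^2s=0$ gives the arc $y\to s$, forcing $ys=0$, again a contradiction.
\end{itemize}
Hence $Ann(y^2)=Ann(y)$.

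\textbf{Part 2, the bound $n_x\le 3$.} Let $x\in Nil(S)-\{0\}$ with $n=n_x$, and suppose $n\ge 4$. Compare $x$ with $x^{n-2}$.
\begin{itemize}
\item $x\in Z(S)$, because $x\cdot x^{n-1}=0$ and $x^{n-1}\neq 0$.
\item $x^{n-2}$ is a non-zero zero divisor, because $x^{n-2}x^2=0$ and $x^2\neq 0$.
\item $x^{n-2}\neq x$. Otherwise $x=x^{n-2}$ with $n-2\ge 2$ gives $x=x^{k}$ for arbitrarily large $k$, hence $x=0$, which is impossible.
\end{itemize}
Now $x^2\cdot x^{n-2}=x^n=0$ with $x^2\neq 0$ and $x^{n-2}\neq 0$, so $x\to x^{n-2}$ is an arc of $\overrightarrow{\Gamma}_E(S)$. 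Equality of the graphs then forces $x^{n-1}=x\cdot x^{n-2}=0$, contradicting minimality of $n_x$. So $n_x\le 3$.

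\textbf{Part 2, the case $n_x=3$.} First, $x\in Ann(x^2)$ since $x^3=0$, and $x\notin Ann(x)$ since $x^2\neq 0$; so $x\in Ann(x^2)-Ann(x)$. Conversely, take $s\in Ann(x^2)-Ann(x)$ with $s\neq x$; then $s\neq 0$.
\begin{itemize}
\item If $sx\neq 0$: from $sx^2=0$ and $x^2\neq 0$, $s$ is a zero divisor. Taking $n=1$, $m=2$ gives the arc $s\to x$ in $\overrightarrow{\Gamma}_E(S)$, so $sx=0$, a contradiction.
\item If $xs\neq 0$: symmetrically, $x^2s=0$ gives the arc $x\to s$, so $xs=0$, a contradiction.
\end{itemize}
Hence $Ann(x^2)-Ann(x)=\{x\}$.

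The main obstacle is not any single computation. It is keeping track of the non-vanishing and distinctness conditions in the definition of $\overrightarrow{\Gamma}_E(S)$, since the arc must join two distinct vertices and the chosen powers must be non-zero. The most delicate of these is $x^{n-2}\neq x$ in the bound $n_x\le 3$, which is handled as above.
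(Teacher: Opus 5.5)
Your proof is correct and follows essentially the same route as the paper. In each part you exhibit an arc of $\overrightarrow{\Gamma}_E(S)$ between distinct vertices, witnessed by a product involving $y^2$, $x^2$, or a power of $x$, that cannot be an arc of $\overrightarrow{\Gamma}(S)$. The differences are minor: for $n_x\ge 4$ you use the arc $x\to x^{n_x-2}$ where the paper uses $x\to x^2$ (these coincide when $n_x=4$), and you explicitly check side conditions the paper leaves implicit, namely non-vanishing, membership in $Z(S)$, distinctness, and $x\in Ann(x^2)-Ann(x)$.
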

\begin{proof}
	\begin{enumerate}
		\item Let $0 \neq y \in Z(S)-Nil(S)$ and $y \neq s \in Z(S)-\{0\}$.
		
		\begin{align*}
			s \in Ann(y^2) &\Rightarrow sy^2 = 0 = y^2s \\
			&\Rightarrow s \to y \text{ and } y \to s \text{ are arcs of } \overrightarrow{\Gamma}_E (S) \\
			&\Rightarrow s \to y \text{ and } y \to s \text{ are arcs of } \overrightarrow{\Gamma} (S) \\
			&\Rightarrow sy = 0 = ys \\
			&\Rightarrow s \in Ann(y).
		\end{align*}
		Thus, $ Ann(y^2) \subseteq Ann(y)$. Therefore, $Ann(y^2) = Ann(y)$ since $Ann(y) \subseteq Ann(y^2)$. 
		
		\item If $n_x \geq 4$, then $x \to x^2$ is an arc of $\overrightarrow{\Gamma}_E (S)$ but not an arc of $\overrightarrow{\Gamma}(S)$ since $x^{n_x -2} x^2=0$ while both $x^2$ and $x^{n_x -2}$ are non-zero whereas on the contrary, $xx^2 = x^3 \neq 0$. This is a contradiction. Thus, $n_x \leq 3$. Let $n_x = 3$. Assume that $Ann(x^{n_x-1})-Ann(x) \neq \{x\}$. Then, $z \in Ann(x^{n_x -1})-Ann(x)$. Therefore, $zx^{n_x -1} = 0 = x^{n_x - 1}z$. Which further implies that both $x \to z$ and $z \to x$ are edges of $\overrightarrow{\Gamma}_E (S)$, whereas $z \notin Ann(x)$ implies that either $zx \neq 0$ or $xz \neq 0$, from which it follows that either $z \to x$ or $x \to z$ is not an edge of $\overrightarrow{\Gamma}(S)$. This is a contradiction given $\overrightarrow{\Gamma}_E (S) = \overrightarrow{\Gamma}(S)$. Thus, $Ann(x^{n_x-1})-Ann(x) = \{x\}$. 
	\end{enumerate}
\end{proof} 

\begin{theorem}\label{gg}
	The following are equivalent for a non-commutative semigroup $S$ with $0$.
	\begin{enumerate}
		\item $\overrightarrow{\Gamma}_E (S) = \overrightarrow{\Gamma}(S)$ 
		\item If $Nil(S) \neq \{0\}$, then for all $x \in Nil(S)$, $n_x \leq 3$ and if $n_x = 3$ then $Ann_{l}(x^{2})-Ann_{l}(x) = \{x\}$ and $Ann_{r}(x^{2})-Ann_{r}(x) = \{x\}$. Moreover $Ann_l(y^2) = Ann_l(y)$ and $Ann_r(y^2) = Ann_r(y)$ for all non-zero $y \in Z(S)-Nil(S)$. 
		\item If $Nil(S) \neq \{0\}$, then for all $x \in Nil(S)$, $n_x = 2$ or $n_x = 3$ only when $Ann_{l}(x^{2})-Ann_{l}(x) = \{x\}$ and $Ann_{r}(x^{2})-Ann_{r}(x) = \{x\}$. Moreover $\sqrt{Ann_{l}(y)}-Nil(S) \subseteq Ann_{l}(y)$ and $\sqrt{Ann_{r}(y)}-Nil(S) \subseteq Ann_{r}(y)$ for all  $y \in Z(S)-\{0\}$.
	\end{enumerate}
\end{theorem}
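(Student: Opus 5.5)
I plan to prove the cycle $(1)\Rightarrow(2)\Rightarrow(3)\Rightarrow(1)$, working throughout with left and right annihilators separately. Theorem \ref{g} only controls the two-sided $Ann$, so its argument has to be redone one side at a time.

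\textbf{$(1)\Rightarrow(2)$.} Take a non-nilpotent $y\in Z(S)-\{0\}$ and $s\in Ann_l(y^2)$ with $s\neq 0$.
\begin{itemize}
\item If $s\neq y$, then $sy^2=0$ with $s\neq 0$ and $y^2\neq 0$. So $s\to y$ is an arc of $\overrightarrow{\Gamma}_E(S)$, hence of $\overrightarrow{\Gamma}(S)$, and therefore $sy=0$.
\item The case $s=y$ is impossible, because it gives $y^3=0$ and makes $y$ nilpotent.
\end{itemize}
Together with $Ann_l(y)\subseteq Ann_l(y^2)$ this gives $Ann_l(y^2)=Ann_l(y)$. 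The right-hand version is symmetric.

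For nilpotent $x$, the argument of Theorem \ref{g}(2) already gives $n_x\le 3$. Now suppose $n_x=3$ and $z\in Ann_l(x^2)-Ann_l(x)$ with $z\neq x$. Then $z\to x$ is an arc of $\overrightarrow{\Gamma}_E(S)$, witnessed by $z\cdot x^2=0$, but it is not an arc of $\overrightarrow{\Gamma}(S)$, which is a contradiction. Also $x$ itself does lie in $Ann_l(x^2)-Ann_l(x)$, since $x^3=0\neq x^2$. The right-hand statement is symmetric.

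One care point is $z=0$: it lies in both annihilators, so it never appears in the difference. Another is that $z$ must be a vertex. It is, since $z\neq 0$ and $zx^2=0$ with $x^2\neq 0$, so $z\in Z(S)$.

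\textbf{$(2)\Rightarrow(3)$.} The nilpotent part of (3) is just a restatement of the nilpotent part of (2). For the radical condition, fix $y\in Z(S)-\{0\}$ and take $s\in\sqrt{Ann_l(y)}-Nil(S)$, so $s^ny=0$ for some $n$. I expect the intended reading of the condition is $sy^n=0\Rightarrow sy=0$, i.e. the radical-type saturation on the $y$ side. That follows directly from Lemma \ref{tumevar} when $y$ is non-nilpotent: $Ann_l(y^n)=Ann_l(y)$.

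With the literal $\sqrt{\ }$ as defined, I would instead have to pass from $s^ny=0$ to $sy=0$ using $Ann_r(s^n)=Ann_r(s)$, which is the right-hand equality of Lemma \ref{tumevar} applied to the non-nilpotent $s$. So I will prove both statements:
\begin{itemize}
\item $s^n\in Ann_l(y)$ gives $y\in Ann_r(s^n)=Ann_r(s)$, hence $sy=0$ and $s\in Ann_l(y)$;
\item the right-hand radical condition is handled symmetrically, using $Ann_l(s^n)=Ann_l(s)$.
\end{itemize}
If $s$ is a zero divisor this is fine. If $s\notin Z(S)$, then $s^ny=0$ with $y\neq 0$ forces $s^n$ to be a zero divisor and hence $s$ as well (or $s^n=0$, which is excluded), so $s\in Z(S)$ anyway.

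\textbf{$(3)\Rightarrow(1)$.} This is the main step. Let $x\to y$ be an arc of $\overrightarrow{\Gamma}_E(S)$, so $x^ny^m=0$ with $x^n,y^m\neq 0$; the goal is $xy=0$. Split into cases by whether $x$ and $y$ are nilpotent.
\begin{itemize}
\item \emph{$x$ non-nilpotent.} Use the radical condition on the left: $x^n\in Ann_l(y^m)$ gives $x\in\sqrt{Ann_l(y^m)}-Nil(S)\subseteq Ann_l(y^m)$, so $xy^m=0$.
\item \emph{Then $y$ non-nilpotent.} Apply the right condition to $x$: $y\in\sqrt{Ann_r(x)}-Nil(S)\subseteq Ann_r(x)$, so $xy=0$.
\item \emph{$y$ nilpotent.} Then $m<n_y\le 3$. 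If $m=1$ we are done. If $m=2$, then $n_y=3$ and $x\in Ann_l(y^2)$. Either $x\in Ann_l(y)$ and we are done, or $x=y$, which is excluded since the vertices are distinct.
\item \emph{$x$ nilpotent.} Then $n\le 2$, and the case $n=2$ is handled symmetrically via $Ann_r(x^2)-Ann_r(x)=\{x\}$. After that, reduce the exponent of $y$ exactly as in the previous cases.
\end{itemize}

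The main obstacle is this bookkeeping: both exponents have to be reduced, and the radical hypothesis is needed with $y^m$ in place of $y$. So I must check that $y^m\in Z(S)-\{0\}$, which holds because $x^ny^m=0$ with both factors nonzero.
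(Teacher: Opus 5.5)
Your proposal is correct and follows the paper's proof essentially step for step. It uses the same cycle $(1)\Rightarrow(2)\Rightarrow(3)\Rightarrow(1)$: for the first implication, arcs of $\overrightarrow{\Gamma}_E(S)$ are forced into $\overrightarrow{\Gamma}(S)$; for the second, Lemma~\ref{tumevar} is applied to the non-nilpotent element of the radical; for the third, you use the same nilpotent/non-nilpotent case split, with the radical condition applied to $y^m$ and to $x$ and the exceptional element handling $n_x=3$. You are also more careful than the paper about the case $s=y$, about $z$ and $s$ being vertices, and about $y^m\in Z(S)-\{0\}$.

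One step should be tightened. In the case where $x$ is nilpotent and $n=2$, invoking $Ann_r(x^2)-Ann_r(x)=\{x\}$ requires excluding $y^m=x$, and for $m\ge 2$ this does not follow from distinctness of the vertices. It does hold: if $y$ is non-nilpotent then so is $y^m$, and if $y$ is nilpotent with $m=2$ then $x=y^2$ would force $x^2=y^4=0$. The paper omits this check as well.
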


\begin{proof}
	$(1) \Rightarrow (2):$ 	
	Let $x \in Nil(S)$ and $n_x = 3$. Assume that $Ann_r(x^{n_x-1})-Ann_r(x) \neq \{x\}$. Then, there exist a $x \neq z \in Z(S) - \{0\}$ such that $z \in Ann_r(x^{n_x -1})-Ann_r(x)$. Therefore, $x^{n_x - 1}z = 0$. Which further implies that both $x \to z$ is an arc of $\overrightarrow{\Gamma}_E (S)$ whereas $z \notin Ann_r(x)$ implies that $xz \neq 0$ from which it follows $x \to z$ is not an edge of $\overrightarrow{\Gamma}(S)$. This is a contradiction given $\overrightarrow{\Gamma}_E (S) = \overrightarrow{\Gamma}(S)$. Thus, $Ann_r(x^{n_x-1})-Ann_r(x) = \{x\}$. $Ann_l(x^{n_x-1})-Ann_(x) = \{x\}$ follows similarly.
	
	Let $\overrightarrow{\Gamma}_E (S) = \overrightarrow{\Gamma}(S)$, $y \in Z(S) \setminus Nil(S)$, and $s \in S \setminus \{0\}$. Since $Ann_r(y) \subseteq Ann_r(y^2)$ and $Ann_l(y) \subseteq Ann_l(y^2)$, it is sufficient to show that $Ann_r(y^2) \subseteq Ann_r(y)$ and $Ann_l(y^2) \subseteq Ann_l(y)$.
	\begin{align*}
		s \in Ann_r(y^2) &\Rightarrow y^2s = 0 \\
		&\Rightarrow y \to s \text{ is an arc in the directed graph } \overrightarrow{\Gamma}_E(S) \\
		&\Rightarrow y \to s \text{ is an arc in the directed graph } \overrightarrow{\Gamma}(S) \\
		&\Rightarrow ys = 0 \\
		&\Rightarrow s \in Ann_r(y).
	\end{align*}
	Hence, $Ann_r(y) = Ann_r(y^2)$. Similarly, it can be shown that $Ann_l(y) = Ann_l(y^2)$.
	
	$(2) \Rightarrow (3):$ Let $z,y \in Z(S)-\{0\}$. 
	\begin{align*}
		z \in \sqrt{Ann_{l}(y)}-Nil(S) &\Rightarrow z^n \in Ann_{l}(y), \quad \exists n \in \mathbb{Z}^{+} \\
		&\Rightarrow z^n y = 0 \\
		&\Rightarrow y \in Ann_{r}(z^n) = Ann_r(z), \quad \text{ by Eq. } (\ref{sag}) \\
		&\Rightarrow zy = 0 \\
		&\Rightarrow z \in Ann_{l}(y).
	\end{align*}
	
	Using the same method, one can see that $\sqrt{Ann_{r}(y)}-Nil(S) \subseteq Ann_{r}(y)$ holds.
	
	$(3) \Rightarrow (1):$ Let $x \to y$ be an arc of $\overrightarrow{\Gamma}_E (S)$. Then, there exist positive integers $n,m$ such that $x^n y^m = 0$ where $x^n, y^m \neq 0$. Consider the following cases.
	
	\begin{enumerate}
		\item Let $x,y \in Nil(S)-\{0\}$. 
		\begin{enumerate}
			\item Let $n_x = n_y = 2$, then $n=m=1$ and $xy=0$. Therefore $x \to y$ is an arc of $\overrightarrow{\Gamma}(S)$. \\
			\item Let $n_x = n_y = 3$. 
			
			If $n=m=1$, then $xy=0$. Thus, $x \to y$ is an arc of $\overrightarrow{\Gamma}(S)$. 
			
			If $n=2$, $m=1$, then 
			\begin{align*}
				x^2y=0 &\Rightarrow y \in Ann_{r}(x^2)=Ann_{r}(x) \cup \{x\} \\
				&\Rightarrow y \in Ann_{r}(x) \\
				&\Rightarrow xy = 0 \\
				&\Rightarrow x \to y \text{ is an arc of } \overrightarrow{\Gamma}(S).
			\end{align*}
			
			The instance where $n=1$ and $m=2$ follows similarly.
			
			If $n=m=2$, then 
			\begin{align*}
				x^2y^2=0 &\Rightarrow y^2 \in Ann_{r}(x^2)=Ann_{r}(x) \cup \{x\} \\
				&\Rightarrow y^2 \in Ann_{r}(x) \\
				&\Rightarrow xy^2 = 0 \\
				&\Rightarrow x \in Ann_{l}(y^2) = Ann_{l}(y) \cup \{y\} \\
				&\Rightarrow x \in Ann_{l}(y) \\
				&\Rightarrow xy = 0 \\
				&\Rightarrow x \to y \text{ is an arc of } \overrightarrow{\Gamma}(S).
			\end{align*} \label{bnz}
			\item Let either $n_x= 2$ and $n_y=3$ or $n_x = 3$ and $n_y=2$. This follows similarly to \ref{bnz}. Thus, $x \to y$ is an arc of $\overrightarrow{\Gamma}(S)$.
			
		\end{enumerate}
		
		\item Let $x \in Nil(S)-\{0\}$ and $y \in Z(S)-Nil(S)$.
		\begin{enumerate}
			\item If  $n_x=2$, then \label{bnza}
			\begin{align*}
				xy^m = 0 &\Rightarrow y^m \in Ann_{r}(x) \\
				&\Rightarrow y \in \sqrt{Ann_{r}(x)} - Nil(S) \subseteq {Ann_r(x)} \\
				&\Rightarrow  y \in {Ann_r(x)} \\
				&\Rightarrow xy = 0 \\
				&\Rightarrow x \to y \text{ is an arc of } \overrightarrow{\Gamma}(S).
			\end{align*}
			
			\item If $n_x = 3$, then either $n=1$ or $n=2$. If $n=1$, then $xy^m=0$ and this case follows similarly to \ref{bnza}. If $n=2$, then
			
			\begin{align*}
				x^2y^m = 0 &\Rightarrow y^m \in Ann_{r}(x^2) = Ann_{r}(x) \cup \{x\} \\
				&\Rightarrow y^m \in Ann_{r}(x) \\
				&\Rightarrow y \in \sqrt{Ann_{r}(x)} - Nil(S) \subseteq {Ann_r(x)} \\
				&\Rightarrow  y \in {Ann_r(x)} \\
				&\Rightarrow xy = 0 \\
				&\Rightarrow x \to y \text{ is an arc of } \overrightarrow{\Gamma}(S).
			\end{align*} 
		\end{enumerate}
		Let $x \in Z(S)-Nil(S)$ and $y \in Nil(S)-\{0\}$. This case can be proved analogously to \ref{bnz2}.
		\label{bnz2}
		\item Let $x,y \in Z(S)-Nil(S)$.
		\begin{align*}
			x^ny^m=0 &\Rightarrow x^n \in Ann_{l}(y^m) \\
			&\Rightarrow x \in \sqrt{Ann_{l}(y^m)} - Nil(S) \subseteq Ann_{l}(y^m) \\
			&\Rightarrow x \in Ann_l(y^m) \\
			&\Rightarrow xy^m = 0 \\
			&\Rightarrow y^m \in Ann_{r}(x) \\
			&\Rightarrow y \in \sqrt{Ann_{r}(x)}-Nil(S) \subseteq  Ann_{r}(x) \\
			&\Rightarrow xy = 0 \\
			&\Rightarrow x \to y \text{ is an arc of } \overrightarrow{\Gamma}(S).
		\end{align*}
	\end{enumerate}
\end{proof}

\begin{corollary}\label{ggg}
	Let $S$ be a non-commutative semigroup with $0$. $\overrightarrow{\Gamma}_E(S) \neq \overrightarrow{\Gamma}(S)$ if there exist an $x \in Nil(S)$ such that $n_x \geq 4$.
\end{corollary}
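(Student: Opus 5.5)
The plan is to argue by contraposition from Theorem \ref{g} (equivalently, from the implication $(1)\Rightarrow(2)$ of Theorem \ref{gg}). If $\overrightarrow{\Gamma}_E(S)=\overrightarrow{\Gamma}(S)$, then part 2 of Theorem \ref{g} forces $n_x\le 3$ for every $x\in Nil(S)-\{0\}$. So an $x$ with $n_x\ge 4$ rules out equality. For completeness, I would also exhibit the distinguishing arc explicitly, since that is the actual content.

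Fix $x\in Nil(S)$ with $n_x\ge 4$, and consider the two elements $x$ and $x^2$. I would carry out the following checks in order.

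\emph{Vertices.} First, $x^2\neq 0$, since $n_x\ge 4>2$. Next, $x^2\neq x$: if $x^2=x$, then $x^k=x\ne 0$ for all $k$, contradicting nilpotency. Both elements are zero divisors, because
\[
x\cdot x^{n_x-1}=0 \quad\text{and}\quad x^2\cdot x^{n_x-2}=0,
\]
with $x^{n_x-1}\neq 0$ and $x^{n_x-2}\neq 0$. Hence $x$ and $x^2$ are distinct vertices of both graphs.

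\emph{Arc in $\overrightarrow{\Gamma}_E(S)$.} Take $n=n_x-2\ge 2$ and $m=1$. Then $x^{n}\,(x^2)^{m}=x^{n_x}=0$, while $x^{n_x-2}\ne 0$ and $x^2\neq 0$. So $x\to x^2$ is an arc of $\overrightarrow{\Gamma}_E(S)$.

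\emph{No arc in $\overrightarrow{\Gamma}(S)$.} We have $x\cdot x^2=x^3\neq 0$, because $3\le n_x-1$. So $x\to x^2$ is not an arc of $\overrightarrow{\Gamma}(S)$.

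The graphs therefore differ, which is the claim. The only step needing care is the vertex check: distinctness of $x$ and $x^2$ and their nonvanishing. Both follow directly from the minimality of $n_x$, so I expect no real obstacle.
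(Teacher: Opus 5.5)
Your proof is correct and takes the paper's approach: the corollary is the contrapositive of part 2 of Theorem \ref{g}, whose proof uses exactly the arc $x \to x^2$, present in $\overrightarrow{\Gamma}_E(S)$ via $x^{n_x-2}x^2=0$ and absent from $\overrightarrow{\Gamma}(S)$ because $x^3\neq 0$. Your checks that $x$ and $x^2$ are distinct nonzero zero divisors fill in details the paper leaves implicit.
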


\begin{proposition}
	Let $S$ be a non-commutative semigroup with $0$. If $Nil(S)=\{0\}$, then $\overrightarrow{\Gamma}_E (S) = \overrightarrow{\Gamma}(S)$.
\end{proposition}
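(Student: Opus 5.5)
The plan is to show that every arc of $\overrightarrow{\Gamma}_E(S)$ is already an arc of $\overrightarrow{\Gamma}(S)$. The converse inclusion, and the equality of the vertex sets, are given by the earlier proposition that $\overrightarrow{\Gamma}(S)$ is a spanning subdigraph. So let $x\to y$ be an arc of $\overrightarrow{\Gamma}_E(S)$, with $x^ny^m=0$. I will reduce the exponents one at a time until $xy=0$. The only hypothesis used is that $S$ is \emph{reduced}: $Nil(S)=\{0\}$ means that $s^k=0$ for some $k$ forces $s=0$.

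First I would prove two elementary facts about reduced semigroups.
\begin{enumerate}
\item If $uv=0$, then $vu=0$. Indeed $(vu)^2=v(uv)u=0$, so $vu=0$ by reducedness.
\item If $uw=0$, then $uzw=0$ for every $z\in S^1$. By (a) we have $wu=0$, so $(uzw)^2=uz(wu)zw=0$, and hence $uzw=0$.
\end{enumerate}

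The key step is the exponent reduction: if $a^kb=0$ with $k\ge 2$, then $a^{k-1}b=0$. The argument runs as follows.
\begin{itemize}
\item From $a^kb=0$, fact (a) gives $ba^k=0$, that is, $(ba^{k-1})a=0$.
\item Applying (a) again gives $a\,b\,a^{k-1}=0$.
\item Then
\begin{equation*}
(a^{k-1}b)^2=a^{k-2}\,(a\,b\,a^{k-1})\,b=0,
\end{equation*}
which is valid because $k\ge 2$ (with the convention $a^0$ omitted).
\item Reducedness gives $a^{k-1}b=0$.
\end{itemize}
Iterating, $a^kb=0$ implies $ab=0$. The mirror statement, that $ab^k=0$ implies $ab=0$, follows by the same computation with left and right interchanged, or by passing through fact (a): $ab^k=0$ gives $b^ka=0$, hence $ba=0$, hence $ab=0$.

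Applying this to $x^ny^m=0$, I take $a=x$ and $b=y^m$ to get $xy^m=0$. I then take the right-hand version with $a=x$ and $b=y$ to get $xy=0$. Hence $x\to y$ is an arc of $\overrightarrow{\Gamma}(S)$, since $x\neq y$ as vertices. The main obstacle is locating the right product to square in the exponent reduction, namely $a^{k-1}b$ combined with $aba^{k-1}=0$. Everything else is routine bookkeeping, and the nonzero conditions $x^n,y^m\neq0$ are not even needed. Alternatively, one could derive the result from Theorem \ref{gg}(3), since its nilpotent conditions are vacuous here. The exponent-reduction lemma above is exactly what verifies $\sqrt{Ann_l(y)}\subseteq Ann_l(y)$ and $\sqrt{Ann_r(y)}\subseteq Ann_r(y)$.
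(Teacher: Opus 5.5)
Your proof is correct, and it takes a different route from the paper's.

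\textbf{The paper's route.} The paper argues by contradiction on two-sided annihilators. It supposes $Ann(x)\neq Ann(x^2)$ for some $x\in Z(S)-\{0\}$ and picks $z\in Ann(x^2)\setminus Ann(x)$. It then splits into cases: $zx=0\neq xz$, its mirror, and $zx,xz\neq 0$ divided according to whether $xzx=0$. In each case it exhibits a nonzero element among $xz$, $zx$, $xzx$ whose square is $0$.

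This establishes $Ann(x^2)=Ann(x)$ for every $x$. The paper then concludes $\overrightarrow{\Gamma}_E(S)=\overrightarrow{\Gamma}(S)$ by citing Theorem~\ref{g}. That theorem, however, only says the two-sided identity is \emph{necessary} for equality. Ruling out $Ann(x)\neq Ann(x^2)$ therefore does not by itself rule out $\overrightarrow{\Gamma}_E(S)\neq\overrightarrow{\Gamma}(S)$.

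To finish along the paper's lines, one would go through the sufficiency direction $(2)\Rightarrow(1)$ of Theorem~\ref{gg}, whose hypotheses are one-sided. The two-sided identity supplies those hypotheses once one observes that $Ann_l(x)=Ann_r(x)=Ann(x)$ in a reduced semigroup, and that observation is precisely your fact (a).

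\textbf{Your route.} You work arc by arc, using $uv=0\Rightarrow vu=0$ and the exponent reduction $a^kb=0\Rightarrow a^{k-1}b=0$. This avoids the detour entirely and handles the one-sided products directly. It is self-contained, relying only on the spanning-subdigraph proposition.

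\textbf{Comparison.} The paper's approach ties the result to its annihilator characterization. Yours gives a complete and elementary proof that in effect verifies condition (3) of Theorem~\ref{gg} in the reduced case. Your fact (b) is never used and can be dropped.
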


\begin{proof} 
	Assume that $Nil(S)=\{0\}$ and that there exist an element $x \in Z(S)-\{0\}$ such that $Ann(x)\neq Ann(x^2)$. Then, by Theorem \ref{g}, $\overrightarrow{\Gamma}_E (S) \neq \overrightarrow{\Gamma}(S)$. Since $Ann(x) \subseteq Ann(x^2)$, there exist $z \in Z(S)-\{0\}$ such that $z \notin Ann(x)$ whereas $z \in Ann(x^2)$. This further implies that either $zx \neq 0$ or $xz\neq 0$ while $zx^2=0$ and $x^2z=0$. Consider following two cases:
	
	\begin{enumerate}
		\item Let $zx = 0$ and $xz \neq 0$. Then
		\begin{align}\label{33}
			(xz)^2=(xz)(xz)=x(zx)z=0.
		\end{align}
		Eq. \ref{33} implies that $0 \neq xz \in Nil(S)$, which is a contradiction. One can prove the case where $zx \neq 0$ and $xz = 0$, similarly.
		\item Let $zx \neq 0$ and $xz \neq 0$. Consider $xzx \in Z(S)$.
		\begin{enumerate}
			\item If $xzx = 0$, then 
			\begin{align}
				(xz)^{2}=(xz)(xz)=(xzx)z=0.
			\end{align}
			This is a contradiction since $Nil(S)=\{0\}$ whereas the equality implies $0 \neq xz \in Nil(S)-\{0\}$.
			\item If $xzx \neq 0$, then
			\begin{align}
				(xzx)^{2}=(xzx)(xzx)=x(zx^2)(zx)=x0(zx)=0.
			\end{align}
			This yields a contradiction, because $Nil(S)=\{0\}$ while the equality implies that $xzx \in Nil(S)$ with $xzx \neq 0$.
		\end{enumerate}
	\end{enumerate}
	Since every possible case contradicts with $Nil(S)=\{0\}$, then $\overrightarrow{\Gamma}_E (S) = \overrightarrow{\Gamma}(S)$.
\end{proof}

This implies that $\overrightarrow{\Gamma}(S)$ is connected if $Z(S)=Nil(S)$. Since $\overrightarrow{\Gamma}(S)$ is a spanning subgraph of $\overrightarrow{\Gamma}_E(S)$, if $\overrightarrow{\Gamma}(S)$ is connected, by Remark \ref{bag}, so is $\overrightarrow{\Gamma}_E(S)$.
\begin{theorem} \label{nx}
	Let $S$ be a non-commutative semigroup with $0$. There exist an element $x \in Nil(S)$ such that for all $y \in Z(S)-\{0\}$, $x \to y$ and $y \to x$ are arcs of $\overrightarrow{\Gamma}_E (S)$ if and only if $Z(S)-\{0\}=\sqrt{Ann(x^{n_x - 1})-\{0\}}$.
\end{theorem}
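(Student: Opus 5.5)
The plan is to prove both directions directly from the definition of arcs in $\overrightarrow{\Gamma}_E(S)$. The one real idea is that powers can be moved up to the exponents $n_x-1$ and $n_y-1$. I read the right-hand side as the set of $y\in S$ having some $k\in\mathbb{Z}^+$ with $0\neq y^k\in Ann(x^{n_x-1})$. Since $\overrightarrow{\Gamma}_E(S)$ is simple, the arc condition is only required for $y\neq x$. Throughout, write $n=n_x$; then $x^{n-1}\neq 0$, and $x^{a}\neq 0$ forces $a\leq n-1$.

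\textbf{($\Rightarrow$).} First treat the inclusion $Z(S)-\{0\}\subseteq\sqrt{Ann(x^{n-1})-\{0\}}$.
\begin{itemize}
\item For $y=x$, take $k=1$: $x\cdot x^{n-1}=0=x^{n-1}\cdot x$ and $x\neq 0$.
\item For $y\neq x$, the arc $x\to y$ gives $a,b$ with $x^ay^b=0$ and $x^a,y^b\neq 0$. Multiplying on the left by $x^{n-1-a}$ (allowed since $a\leq n-1$) gives $x^{n-1}y^b=0$. Likewise the arc $y\to x$ gives $c,d$ with $y^cx^{n-1}=0$ and $y^c\neq 0$.
\item I then need a single exponent $k$ with $y^k\neq 0$, $x^{n-1}y^k=0$ and $y^kx^{n-1}=0$. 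Any $k\geq\max(b,c)$ satisfies the two annihilation conditions. If $y\notin Nil(S)$, take $k=\max(b,c)$. If $y\in Nil(S)$, take $k=n_y-1$; this is $\geq\max(b,c)$ because $y^b,y^c\neq 0$.
\end{itemize}
This choice of $k$ in the nilpotent case is the step I expect to need the most care. It yields $0\neq y^k\in Ann(x^{n-1})$.

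For the reverse inclusion, let $0\neq y^k\in Ann(x^{n-1})$. Then $y\neq 0$, and I must show $y\in Z(S)$. Let $j\geq 1$ be the least exponent with $y^jx^{n-1}=0$; it exists since $j=k$ works.
\begin{itemize}
\item If $j=1$, then $yx^{n-1}=0$ with $x^{n-1}\neq 0$.
\item If $j>1$, then $y\,(y^{j-1}x^{n-1})=0$ with $y^{j-1}x^{n-1}\neq 0$.
\end{itemize}
Either way $y$ is a left zero divisor, so $y\in Z(S)-\{0\}$.

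\textbf{($\Leftarrow$).} Let $y\in Z(S)-\{0\}$ with $y\neq x$. By hypothesis there is $k$ with $y^k\neq 0$ and $y^kx^{n-1}=0=x^{n-1}y^k$. Since also $x^{n-1}\neq 0$, the pairs of exponents $(n-1,k)$ and $(k,n-1)$ witness that $x\to y$ and $y\to x$ are arcs of $\overrightarrow{\Gamma}_E(S)$. The element $x$ itself lies in $Nil(S)$, as the statement requires.
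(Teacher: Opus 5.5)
Your proof is correct and follows essentially the same route as the paper's: push the $x$-exponents up to $n_x-1$, pick one common exponent for $y$ (the paper's ``without loss of generality $a\le m$'' amounts to your $k=\max(b,c)$), and read the arcs off directly for the converse. Your separate choice $k=n_y-1$ for nilpotent $y$ is harmless but unnecessary, since $y^{\max(b,c)}$ equals $y^b$ or $y^c$ and is therefore nonzero; your explicit treatment of $y=x$ and of the inclusion $\sqrt{Ann(x^{n_x-1})-\{0\}}\subseteq Z(S)-\{0\}$ fills in steps the paper leaves implicit.
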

\begin{proof}
	Let $x \in Nil(S)$ and $x \to y$ and $y \to x$ be arcs of $\overrightarrow{\Gamma}_E (S)$. Then, there exist $a,b,m,n \in \mathbb{Z}^{+}$ such that $x^n y^m =0$ and $y^a x^b=0$ where $x^n,y^m,x^b,y^a \neq 0$. Since $x$ is nilpotent, we have $n \leq n_x - 1$ and $b \leq n_x - 1$. Thus, 
	\begin{align}
		x^{n_x - 1}y^m = 0 = y^ax^{n_x -1}.
	\end{align}
	Without the loss of generality, assume that $a \leq m$. This furthermore implies that
	\begin{align}
		x^{n_x - 1}y^m = 0 = y^m x^{n_x -1}.
	\end{align}
	Then, $y^m \in Ann(x^{n_x -1})-\{0\}$. Moreover $y \in \sqrt{ Ann(x^{n_x -1})-\{0\}}$. 
	
	Thus, $Z(S)-\{0\}=\sqrt{ Ann(x^{n_x -1})-\{0\}}$.
	
	Consider the converse of the proof. Namely, let $Z(S)-\{0\}=\sqrt{Ann(x^{n_x - 1})-\{0\}}$. Hence, for all $y \in Z(S)-\{0\}$, there exist an $n \in \mathbb{Z}^+$ such that $y^n \in Ann(x^{n_x - 1})-\{0\}$. Then, $y^nx^{n_x - 1} = 0 = x^{n_x - 1}y^n$. Thus, $x \to y$ and $y \to x$ are arcs of $\overrightarrow{\Gamma}_E (S)$. Thus, $x$ is adjacent to and adjacent from all $y \in Z(S)-\{0\}$. \end{proof}

\begin{proposition}\label{idempotent}
	Let $S$ be a non-commutative semigroup with $0$. If $x \in Z(S)-Nil(S)$ and $x$ is adjacent to or adjacent from $y$ in $\overrightarrow{\Gamma}_E (S)$, for all $y \in Z(S)-\{0\}$, then $x$ is idempotent. 
\end{proposition}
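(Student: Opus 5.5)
The plan is to test the hypothesis on one well-chosen vertex, namely $y=x^2$, and show that any arc between $x$ and $x^2$ in $\overrightarrow{\Gamma}_E(S)$ would force $x$ to be nilpotent. The argument is by contradiction: assume $x\in Z(S)-Nil(S)$ satisfies the hypothesis but $x^2\neq x$.

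First I check that $x^2$ is a legitimate vertex of $\overrightarrow{\Gamma}_E(S)$ distinct from $x$. Since $x\notin Nil(S)$, every power $x^k$ is non-zero; in particular $x^2\neq 0$. Since $x\in Z(S)$, either $xb=0$ or $ax=0$ for some non-zero $a$ or $b$.
\begin{itemize}
\item If $xb=0$, then $x^2b=x(xb)=0$, so $x^2$ is a left zero divisor.
\item If $ax=0$, then $ax^2=(ax)x=0$, so $x^2$ is a right zero divisor.
\end{itemize}
Either way $x^2\in Z(S)-\{0\}$, and by assumption $x^2\neq x$. So $x^2$ is a vertex different from $x$.

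Next I apply the hypothesis with $y=x^2$. It gives an arc $x\to x^2$ or an arc $x^2\to x$ in $\overrightarrow{\Gamma}_E(S)$. In the first case there are $n,m\in\mathbb{Z}^+$ with
\begin{align*}
x^n(x^2)^m=x^{n+2m}=0 .
\end{align*}
In the second case there are $n,m\in\mathbb{Z}^+$ with
\begin{align*}
(x^2)^n x^m=x^{2n+m}=0 .
\end{align*}
Both exponents are at least $3\geq 2$, so in either case $x\in Nil(S)$. This contradicts $x\notin Nil(S)$, hence $x^2=x$ and $x$ is idempotent.

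The only step needing care is the first one: confirming that $x^2$ lies in the vertex set $Z(S)-\{0\}$ and is distinct from $x$. This matters because arcs are only defined between distinct vertices, and it is exactly where both the zero-divisor and the non-nilpotency hypotheses on $x$ are used. The rest is a one-line exponent computation that uses only power-associativity in the semigroup, so no earlier annihilator results (such as Lemma \ref{tumevar} or Theorem \ref{gg}) are needed.
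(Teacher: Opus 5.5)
Your proof is correct and matches the paper's: you assume $x^2\neq x$, apply the hypothesis to the vertex $y=x^2$, and note that either arc gives $x^{n+2m}=0$ or $x^{2n+m}=0$, which contradicts $x\notin Nil(S)$. Your extra check that $x^2$ is a non-zero zero divisor distinct from $x$, and hence a legitimate vertex, fills in a step the paper leaves implicit.
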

\begin{proof}
	Let $x \in Z(S)-Nil(S)$ such that for all $y \in Z(S)-\{0\}$ at least one of $x \to y$ or $y \to x$ is an arc of $\overrightarrow{\Gamma}_E (S)$. 
	
	Let $x^2 \neq x$. Then, $x$ is adjacent to $x^2$ (or $x^2$ is adjacent to $x$), implying that there exist $a,b \in \mathbb{Z}^{+}$ such that $x^a ({x^2})^b = x^{a+2b} = 0$ (or $({x^2})^b x^a = 0$). This is a contradiction since $x \notin Nil(S)$.
\end{proof}

\begin{corollary}\label{idemp}
	Let $S$ be a non-commutative semigroup with $0$. If	$\overrightarrow{\Gamma}_E (S)$ is a complete graph, then $Z(S)-Nil(S)=Z(S)\cap E(S)$. Namely, for any $x \in Z(S)-\{0\}$, under this condition, $x$ is either nilpotent or idempotent.
\end{corollary}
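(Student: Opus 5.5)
The plan is to derive Corollary \ref{idemp} directly from Proposition \ref{idempotent}. Completeness of $\overrightarrow{\Gamma}_E(S)$ is a much stronger hypothesis than the one that proposition needs. I will prove two inclusions, $Z(S)-Nil(S)\subseteq Z(S)\cap E(S)$ and the reverse one.

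For the first inclusion, take $x\in Z(S)-Nil(S)$. Since $x$ is a zero divisor it is non-zero, so $x$ is a vertex of $\overrightarrow{\Gamma}_E(S)$. By completeness, for every vertex $y\neq x$ both $x\to y$ and $y\to x$ are arcs, so certainly $x$ is adjacent to or adjacent from every $y\in Z(S)-\{0\}$. Proposition \ref{idempotent} then gives $x^2=x$, i.e. $x\in E(S)$.

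The one point to watch is $y=x$: the digraph has no loops, so "for all $y$" in Proposition \ref{idempotent} really means all $y\neq x$. The proof of that proposition only uses the vertex $y=x^2$, and only in the case $x^2\neq x$. That vertex is legitimate: $x^2\neq 0$ because $x\notin Nil(S)$, and $x^2\in Z(S)$ because $x\in Z(S)$. For instance, if $ax=0$ with $a\neq 0$, then $ax^2=0$, so $x^2$ is a right zero divisor; the left case is symmetric. So the proposition applies exactly as stated.

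For the reverse inclusion, take $x\in Z(S)\cap E(S)$. If $x$ were in $Nil(S)$, then $x=x^{k}$ for every $k$ would give $x=0$. But $0\notin Nil(S)$ by definition, and a zero divisor is non-zero, so this is impossible and $x\in Z(S)-Nil(S)$. This yields the equality of the two sets. The ``namely'' clause follows at once: every non-zero zero divisor is either in $Nil(S)$ or in $Z(S)-Nil(S)=Z(S)\cap E(S)$, hence nilpotent or idempotent.

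The only step I expect to need care is the loop convention just discussed, namely that the vertex $x^2$ differs from $x$ and lies in $Z(S)-\{0\}$. Beyond that the corollary is immediate.
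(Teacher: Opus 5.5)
Your proof is correct and takes the paper's route. The paper's entire proof is the remark that the corollary is ``straightforward from Theorem \ref{nx} and Proposition \ref{idempotent},'' and its real content is Proposition \ref{idempotent} used exactly as you use it (Theorem \ref{nx} is not actually needed). Your extra checks supply details the paper leaves implicit: that the quantifier in Proposition \ref{idempotent} must exclude $y=x$, that $x^2$ is a genuine vertex of $Z(S)-\{0\}$, and that a non-zero idempotent cannot be nilpotent, which gives the reverse inclusion.
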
 
\begin{proof}
	Straightforward from Theorem \ref{nx} and Proposition \ref{idempotent}.
\end{proof}

\begin{corollary}\label{id}
	Let $x \in Z(S)-\{0\}$ and, for all $y \in Z(S)-\{0\}$, let $x \to y$ and $y \to x$ be arcs of $\overrightarrow{\Gamma}_E (S)$ for a non-commutative semigroup $S$ with zero. Then, there exist $n \in \mathbb{Z}^+ $ such that $Z(S)-\{0\} =\sqrt{Ann(x^n)-\{0\} }$.
\end{corollary}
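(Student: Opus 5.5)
The plan is to split on whether $x$ is nilpotent and, in each case, choose a single exponent $n$ that works for all $y$ at once.

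\emph{Case 1: $x \in Nil(S)$.} Theorem \ref{nx} applies directly. It gives $Z(S)-\{0\}=\sqrt{Ann(x^{n_x-1})-\{0\}}$, so we take $n=n_x-1$. The only detail to check is that $n_x\ge 2$, so that $n$ is a positive integer; this holds because $x\neq 0$.

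\emph{Case 2: $x \in Z(S)-Nil(S)$.} The hypothesis is stronger than that of Proposition \ref{idempotent}, so $x$ is idempotent and $x^k=x$ for every $k\ge1$. We claim $n=1$ works, i.e.\ that every $y\in Z(S)-\{0\}$ has some power $y^m$ with $y^m\neq 0$ and $y^m\in Ann(x)$.
\begin{enumerate}
\item The arcs $x\to y$ and $y\to x$ give exponents $m,a,n',b$ with $x^{n'}y^m=0$ and $y^a x^b=0$, all four powers nonzero. By idempotence these read $xy^m=0$ and $y^a x=0$.
\item Set $t=\max(a,m)$. Then $xy^t=(xy^m)y^{t-m}=0$ when $t\ge m$, and $y^t x=y^{t-a}(y^a x)=0$ when $t\ge a$. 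Hence $y^t\in Ann(x)$.
\end{enumerate}

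\textbf{Main obstacle.} We must ensure $y^t\neq 0$, which is required for $y\in\sqrt{Ann(x)-\{0\}}$.
\begin{enumerate}
\item If $y\notin Nil(S)$, then all powers of $y$ are nonzero and there is nothing to check.
\item If $y$ is nilpotent, then $y^m\neq0$ and $y^a\neq0$ force $m,a\le n_y-1$. Hence $t\le n_y-1$, so $y^t\neq 0$.
\end{enumerate}
In both cases the maximum is never pushed up to a power that vanishes.

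\textbf{Reverse inclusion.} The inclusion $\sqrt{Ann(x^n)-\{0\}}\subseteq Z(S)-\{0\}$ needs a short check. If $y^k\in Ann(x^n)-\{0\}$ then $y\neq 0$. Moreover $y$ is a zero divisor: either $y$ is nilpotent, or $y\cdot(y^{k-1}x^n)=0$ with a suitable nonzero factor, adjusting when $k=1$ using $x^n\neq0$. This is the one spot where the paper's convention on $Z(S)$ must be used loosely. If it resists, I would note that the paper's Theorem \ref{nx} treats this inclusion as immediate, and I would follow that convention.
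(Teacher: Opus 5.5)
\textbf{Verdict: there is a genuine gap.} You follow the paper's proof step for step: Theorem~\ref{nx} for nilpotent $x$, and Proposition~\ref{idempotent} plus the larger of the two exponents otherwise. Case~1 is fine. Case~2 has a gap, and the paper's own argument shares it.

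You apply the arc hypothesis to every $y\in Z(S)-\{0\}$, but it only supplies arcs for $y\neq x$. In this paper $\overrightarrow{\Gamma}_E(S)$ is loopless: when $Z(S)=Nil(S)$ has a single nonzero element it is called the trivial graph, and the proof of Proposition~\ref{idempotent} needs $x^2\neq x$ to get an arc between $x$ and $x^2$. The vertex $y=x$ is exactly where the claim breaks, and it cannot be patched. If $x^k\in Ann(x^n)$ for some $k$, then $x^{n+k}=0$. So for non-nilpotent $x$ we get $x\notin\sqrt{Ann(x^n)-\{0\}}$ for every $n$, even though $x\in Z(S)-\{0\}$. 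In Case~1 the vertex $x$ causes no trouble, because $x^{n_x-1}x^{n_x-1}=0$. If one instead allowed loops and read the hypothesis as including $y=x$, it would force $x\in Nil(S)$, and Case~2 would be vacuous rather than wrong.

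Consequently the statement is false whenever $x$ is not nilpotent. For example, let $S=\{0,e,u,v\}$ be the $0$-direct union of $\{0,e\}$ (with $e^2=e$) and the left-zero semigroup $\{u,v\}$ with a zero adjoined ($u^2=uv=u$, $v^2=vu=v$). All products between $e$ and $u,v$ are $0$. Then:
\begin{itemize}
\item $S$ is non-commutative and $Z(S)-\{0\}=\{e,u,v\}$.
\item $e$ is joined to and from both $u$ and $v$ in $\overrightarrow{\Gamma}_E(S)$.
\item Yet $\sqrt{Ann(e^n)-\{0\}}=\{u,v\}$ for every $n$.
\end{itemize}
What your Case~2 argument, together with your reverse inclusion, actually proves is $Z(S)-\{0,x\}=\sqrt{Ann(x)-\{0\}}$. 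The corollary as stated holds only for nilpotent $x$, where it is just Theorem~\ref{nx}.

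Two minor remarks. Your ``main obstacle'' is immediate, because $t=\max(a,m)\in\{a,m\}$ and $y^a,y^m\neq 0$ by hypothesis. The reverse inclusion also needs no loosened convention: if $y^kx^n=0$ with $y^k\neq0\neq x^n$, take the least $j\geq 1$ with $y^jx^n=0$. Then $y\cdot(y^{j-1}x^n)=0$ with $y^{j-1}x^n\neq 0$ (reading $y^0x^n$ as $x^n$), so $y$ is a left zero divisor.
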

\begin{proof}
	If $x \in Nil(S)$, then by Theorem \ref{nx} it follows that $Z(S)-\{0\} =\sqrt{Ann(x^{n_x - 1})-\{0\} }$. 
	
	Let $x \notin Nil(S)$. Then, by Proposition \ref{idempotent}, $x$ is an idempotent. Thus, for all integers $n \geq 2$, $x^n = x$. Hence $\sqrt{Ann(x^n)}=\sqrt{Ann(x)}$. 
	
	Consider $y \in Z(S)-\{0\}$. By hypothesis, $x \to y$ and $y \to x$ are arcs of $\overrightarrow{\Gamma}_E (S)$. Then, there exist positive integers $a,b$ such that 
	\begin{align*}
		x y^a = 0 = y^b x
	\end{align*}
	with $y^a, y^b \neq 0$. Then, either $a \leq b$ or $b \leq a$. Assume the latter. In this case, we have
	\begin{align}
		x y^a = 0 = y^a x,
	\end{align}
	i.e. $y^a \in Ann(x) -\{0\}$. Therefore, $y \in \sqrt{Ann(x)-\{0\}}$. Thus, $Z(S)-\{0\} \subseteq \sqrt{Ann(x)-\{0\}}$. Furthermore, since $\sqrt{Ann(x)-\{0\}} \subseteq Z(S)-\{0\}$,
	$Z(S)-\{0\}=\sqrt{Ann(x)-\{0\}}$ is obtained.
\end{proof}

\begin{theorem} 
	Let $S$ be a non-commutative semigroup with $0$. $\overrightarrow{\Gamma}_E (S)$ is a complete digraph if and only if one of the following holds for any $x, y \in Z(S)-\{0\}$, 
	
	\begin{enumerate}
		\item If both $x,y \notin Nil(S)$, then $x \to y$ and $y \to x$ are arcs of $\overrightarrow{\Gamma}(S)$. \\
		\item If $x \in Nil(S)$ while $y \notin Nil(S)$, then $x \in \sqrt{Ann(y)-\{0\}}$ (or, if $y \in Nil(S)$ while $x \notin Nil(S)$, then $y \in \sqrt{Ann(x)-\{0\}}$). \\
		\item If both $x,y \in Nil(S)$, then $x \in \sqrt{Ann(y^{n_y -1})-\{0\} }$ and $y \in \sqrt{Ann(x^{n_x -1})-\{0\} }$.
	\end{enumerate}
\end{theorem}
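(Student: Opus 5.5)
The proof goes arc by arc. Completeness of $\overrightarrow{\Gamma}_E(S)$ means that for every ordered pair of distinct $x,y\in Z(S)-\{0\}$ there are exponents $n,m$ with $x^ny^m=0$ and $x^n,y^m\neq 0$. I would therefore read the three conditions as a case split on whether $x$ and $y$ are nilpotent, and prove both directions inside each case.

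\textbf{Both $x,y\notin Nil(S)$.}

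Forward direction: Corollary \ref{idemp} says that every non-nilpotent vertex of a complete $\overrightarrow{\Gamma}_E(S)$ is idempotent. So $x^n=x$ and $y^m=y$ for all $n,m$. The arcs $x\to y$ and $y\to x$ then give $xy=0=yx$, i.e.\ both are arcs of $\overrightarrow{\Gamma}(S)$.

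Converse: arcs of $\overrightarrow{\Gamma}(S)$ are arcs of $\overrightarrow{\Gamma}_E(S)$ by the spanning subdigraph proposition, so there is nothing to prove.

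\textbf{Exactly one of them nilpotent, say $x\in Nil(S)$, $y\notin Nil(S)$.}

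Forward direction: $y$ is idempotent, as above. The two arcs give $x^ny=0$ and $yx^b=0$ with $x^n,x^b\neq 0$. Put $k=\max(n,b)$. Then $x^k$ is still nonzero: pushing an exponent upward is harmless as long as it stays below $n_x$. Moreover $x^k$ is annihilated on both sides by $y$, which is the symmetrization trick from the proof of Theorem \ref{nx}. Hence $x^k\in Ann(y)-\{0\}$, i.e.\ $x\in\sqrt{Ann(y)-\{0\}}$.

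Converse: $x^ky=0=yx^k$ with $x^k\neq0$, and $y=y^1\neq 0$, gives both arcs directly.

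\textbf{Both nilpotent.}

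Forward direction: Theorem \ref{nx} yields the condition only when $x$ is universally adjacent. Instead I would repeat its argument pairwise. The arcs give $x^ny^m=0$ and $y^ax^b=0$. Raising the $x$-exponents to $n_x-1$ keeps the products zero, so $x^{n_x-1}y^m=0=y^ax^{n_x-1}$. Then replace $m$ and $a$ by $\max(m,a)$, which must be kept below $n_y$. This gives $y^{\max}\in Ann(x^{n_x-1})-\{0\}$. By symmetry, $x\in\sqrt{Ann(y^{n_y-1})-\{0\}}$.

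Converse: it is immediate, because $x^{n_x-1}\neq 0$ and the radical membership supplies a nonzero power of $y$ annihilating $x^{n_x-1}$ on both sides.

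\textbf{Main obstacle.} The delicate step is justifying that raising exponents keeps the powers nonzero. In the nilpotent case this needs $\max(m,a)<n_y$, which holds since both $m,a<n_y$. In the mixed case it needs $k<n_x$, which holds since $n,b<n_x$. A second issue is the reading of ``one of the following holds for any $x,y$''. I would interpret it as ``whichever case applies to the pair, its condition holds''; this conjunction over cases is what makes the equivalence true, and I would state this interpretation at the start of the proof.
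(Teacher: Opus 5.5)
Your proposal is correct and follows essentially the same route as the paper's proof. Both split into cases according to whether $x$ and $y$ are nilpotent, use Corollary~\ref{idemp} to make non-nilpotent vertices idempotent in the forward direction, and verify the arcs directly for the converse; the only difference is that you raise exponents to $\max(n,b)$ or $\max(m,a)$ where the paper raises them straight to $n_x-1$ and $n_y-1$, and both choices keep the powers nonzero and the products zero.
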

\begin{proof}
	\textbf{Only if part.} Since $\overrightarrow{\Gamma}_E (S)$ is a complete digraph, for all $x \in Z(S)-\{0\}$, $x$ is either nilpotent or idempotent by Corollary \ref{idemp}, and there exist $n,m,\alpha,\beta \in \mathbb{Z}^+$ such that $x^ny^m=0 = y^{\alpha} x^{\beta}$ where  $x^n \neq 0, y^m \neq 0, y^{\alpha} \neq 0, x^{\beta} \neq 0$.
	\begin{enumerate}
		\item Let $x,y \notin Nil(S)$. Then $x,y \in E(S)$. Thus, 
		$$\begin{aligned} 
			x^ny^m=0 = y^{\alpha} x^{\beta} &\Rightarrow xy = 0 = yx \\
			&\Rightarrow x \to y \text{ and } y \to x \text{ are arcs of } \overrightarrow{\Gamma}(S).
		\end{aligned}$$
		
		\item Let $x \in Nil(S)$ and $y \notin Nil(S)$. This implies that $y \in E(S)$. Hence, 
		$$\begin{aligned}
			x^ny^m=0=y^{\alpha}x^{\beta} &\Rightarrow x^ny=0=yx^{\beta}\\
			&\Rightarrow x^{n_x -1}y=0=yx^{n_x -1}\\
			&\Rightarrow  x^{n_x -1} \in Ann(y)-\{0\} \\
			&\Rightarrow  x \in \sqrt{Ann(y)-\{0\} }.
		\end{aligned}$$
		
		\item Let $x,y \in Nil(S)$. 
		$$\begin{aligned}
			x^ny^m=0=y^{\alpha}x^{\beta} &\Rightarrow x^{n_x -1}y^{n_y -1}=0=y^{n_y -1}x^{n_x -1} \\
			&\Rightarrow  x^{n_x -1} \in Ann(y^{n_y -1})-\{0\} \text{ and } y^{n_y -1} \in Ann(x^{n_x -1})-\{0\} \\
			&\Rightarrow x \in \sqrt{Ann(y^{n_y -1})-\{0\} } \text{ and } y \in \sqrt{Ann(x^{n_x -1})-\{0\} }. 
		\end{aligned}$$
	\end{enumerate}
	
	\textbf{The if part}. Let $x,y \in Z(S)-\{0\}$.
	
	\begin{enumerate}
		\item If $x,y \notin Nil(S)$, then, by 1, $x \to y$ and $y \to x$ are arcs of $\overrightarrow{\Gamma}(S)$. Thus, $x\to y$ and $y \to x$ are arcs of $\overrightarrow{\Gamma}_E(S)$.
		
		\item If $x \in Nil(S)$ and $y \notin Nil(S)$, then
		\begin{align*}
			x \in \sqrt{Ann(y)-\{0\}} &\Rightarrow x^n \in Ann(y)-\{0\}, \quad \exists n \in \mathbb{Z}^+ \\
			&\Rightarrow x^n y = 0 = y x^n \\
			&\Rightarrow x \to y \text{ and } y \to x \text{ are arcs of } \overrightarrow{\Gamma}_E (S).
		\end{align*}
		\item If $x,y \in Nil(S)$, then, by 3., 
		\begin{align*}
			x \in \sqrt{Ann(y^{n_y -1})-\{0\}}  &\Rightarrow x^n \in Ann(y^{n_y -1})-\{0\} \quad \exists n \in \mathbb{Z}^+ \\
			&\Rightarrow x^ny^{n_y -1} = 0 = y^{n_y -1}x^n \\
			&\Rightarrow x \to y \text{ and } y \to x \text{ are arcs of }\overrightarrow{\Gamma}_E(S).
		\end{align*}
		and 
		\begin{align*}
			y \in \sqrt{Ann(x^{n_x -1})-\{0\}} &\Rightarrow y^m \in Ann(x^{n_x -1})-\{0\} \quad \exists m \in \mathbb{Z}^+ \\
			&\Rightarrow y^m x^{n_x -1} = x^{n_x -1} y^m \\ 
			&\Rightarrow x \to y \text{ and } y \to x \text{ are arcs of }\overrightarrow{\Gamma}_E(S).
		\end{align*}
	\end{enumerate}
	Thus, any pair of distinct vertices are connected by two symmetric arcs. Hence, $\overrightarrow{\Gamma}_E(S)$ is a complete digraph.
\end{proof}

Let $\overline{Z}(S)=\{x^{n_x -1} : x \in Nil(S) -\{0\}\}$ for semigroup $S$ with $0$ \cite{extended}.

\begin{corollary}
	Let $\overrightarrow{\Gamma}_E (S) \neq \overrightarrow{\Gamma}(S)$ for a non-commutative semigroup $S$ with $0$. Then,	$\overrightarrow{\Gamma}_E (S)$ is a complete digraph if $Z(S)=Nil(S)$ and $\overline{Z}(S)^{2}=\{0\}$. 
\end{corollary}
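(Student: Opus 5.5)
The plan is to verify completeness directly from the definition of $\overrightarrow{\Gamma}_E(S)$. Since $Z(S)=Nil(S)$, every vertex is nilpotent, so for each $x \in Z(S)-\{0\}$ the index $n_x$ is defined. The element $x^{n_x-1}$ is nonzero, and it lies in $\overline{Z}(S)$ by definition. This reduces the whole statement to a single product computation, with no case split into nilpotent and non-nilpotent vertices.

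The steps, in order, are as follows.

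First, take arbitrary distinct $x,y \in Z(S)-\{0\}$. By hypothesis both lie in $Nil(S)-\{0\}$.

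Second, set $n=n_x-1$ and $m=n_y-1$. Both are positive integers, since a nonzero nilpotent has $n_x \geq 2$ by the definition of nilpotent element. Moreover $x^n \neq 0$ and $y^m \neq 0$ by minimality of $n_x,n_y$.

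Third, observe that $x^n,y^m \in \overline{Z}(S)$. Since $\overline{Z}(S)^2=\{0\}$, every product of two of its elements vanishes, so $x^n y^m = 0$ and $y^m x^n = 0$. By the definition of $\overrightarrow{\Gamma}_E(S)$, both $x \to y$ and $y \to x$ are arcs. As $x,y$ were arbitrary, every ordered pair of distinct vertices is joined, and $\overrightarrow{\Gamma}_E(S)$ is a complete digraph.

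Alternatively, the conclusion follows from case 3 of the preceding theorem. For this, note that $x \in \sqrt{Ann(y^{n_y-1})-\{0\}}$ is witnessed by the power $x^{n_x-1}$, which is nonzero and annihilates $y^{n_y-1}$ on both sides; the symmetric statement holds for $y$. Cases 1 and 2 of that theorem are vacuous because $Z(S)=Nil(S)$.

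The hypothesis $\overrightarrow{\Gamma}_E(S)\neq\overrightarrow{\Gamma}(S)$ is not needed for the argument. I will remark that it only rules out the case, covered by Theorem \ref{gg}, where the conclusion would already concern $\overrightarrow{\Gamma}(S)$ itself.

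The only point needing care, which I expect to be the main (mild) obstacle, is reading $\overline{Z}(S)^2=\{0\}$ correctly. It must be taken in the sense of the remark on $AB$, namely that all products $ab$ with $a,b\in\overline{Z}(S)$ vanish, including the case $a=b$. I also need to check that the exponents $n_x-1$ are at least $1$, so that they are admissible in the definition of an arc.
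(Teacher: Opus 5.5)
Your proposal is correct and follows the paper's own argument. For distinct $x,y\in Z(S)-\{0\}=Nil(S)-\{0\}$, the nonzero elements $x^{n_x-1},y^{n_y-1}$ lie in $\overline{Z}(S)$, so $\overline{Z}(S)^2=\{0\}$ forces $x^{n_x-1}y^{n_y-1}=0$, and hence $x\to y$ is an arc. Your extra remarks make explicit what the paper leaves implicit, and all of them are accurate: checking $n_x-1\geq 1$, stating both directions, covering $x^{n_x-1}=y^{n_y-1}$ via the $AB$ reading of $\overline{Z}(S)^2$, and observing that the hypothesis $\overrightarrow{\Gamma}_E(S)\neq\overrightarrow{\Gamma}(S)$ is never used.
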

\begin{proof}
	Let $x,y \in Z(S)-\{0\}$. Since $Z(S) = Nil(S)$, $x,y \in Nil(S)$ with $x^{n_x - 1}, y^{n_y - 1} \neq 0$ and $\overline{Z}(S)^{2}=\{0\}$ implies that $x^{n_x - 1} y^{n_y - 1} = 0$. Thus, $x \to y$ is an arc of 	$\overrightarrow{\Gamma}_E (S)$. 
\end{proof}

In \cite{wright}, Wright proved that $\overrightarrow{\Gamma}(S)$ is strongly connected if and only if all zero divisors of $S$ are two sided. Moreover, the author proved that the diameter of $\overrightarrow{\Gamma}(S)$ is at most 3 if it is connected. 

\begin{theorem}
	Let $Z(S)=Nil(S)\neq\{0\}$.
	\begin{enumerate}
		\item If $|Z(S)\setminus\{0\}|=1$, then $\overrightarrow{\Gamma}_E(S)$ is the trivial graph and $Diam(\overrightarrow{\Gamma}_E(S))=0$.
		
		\item If $|Z(S)\setminus\{0\}| \geq 2$, then $\overrightarrow{\Gamma}_E(S)$ is strongly connected and $
		Diam(\overrightarrow{\Gamma}_E(S))\leq 2$.
		Moreover,
		\begin{enumerate}
			\item
			$
			Diam(\overrightarrow{\Gamma}_E(S))=1
			\Longleftrightarrow
			\overline{Z}(S)^2=\{0\},
			$
			
			\item
			$
			Diam(\overrightarrow{\Gamma}_E(S))=2
			\Longleftrightarrow
			\overline{Z}(S)^2\neq\{0\}.
			$
		\end{enumerate}
	\end{enumerate}
\end{theorem}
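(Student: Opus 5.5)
The plan is to work directly with the arc criterion of $\overrightarrow{\Gamma}_E(S)$ and exploit the fact that every vertex is nilpotent. Fix $x \in Z(S)-\{0\}=Nil(S)-\{0\}$ and write $\bar x = x^{n_x-1} \neq 0$. The key observation is that for distinct $x,y$, the arc $x \to y$ exists if and only if $x^a y^b = 0$ for some $1\le a\le n_x-1$ and $1 \le b \le n_y-1$. Since $x^a y^b = 0$ forces $\bar x\,\bar y = x^{n_x-1-a}(x^a y^b) y^{n_y-1-b} = 0$, and conversely $\bar x \bar y = 0$ is itself a witness, this gives the clean criterion
$$x \to y \text{ is an arc of } \overrightarrow{\Gamma}_E(S) \iff \bar x\,\bar y = 0 .$$
(If $n_x-1-a=0$ the factor is simply omitted; this is the only bookkeeping needed.)

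Part 1 is immediate. With a single vertex there are no arcs, since the graph is simple, so the graph is trivial and its diameter is $0$.

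For part 2, take distinct $x,y$ and look at the element $z=\bar x$.
\begin{itemize}
\item $z \neq 0$, and $z\cdot \bar x = x^{2n_x-2}=0$ because $2n_x-2 \ge n_x$ when $n_x\ge2$. So $z \in Z(S)-\{0\}$, and moreover $z^2=0$, which gives $\bar z = z$.
\item By the criterion, $x \to z$ holds exactly when $\bar x z = \bar x^2 = 0$, which is true; this is valid whenever $z \ne x$.
\item The real issue is getting from $z$ to $y$. That requires $\bar x\,\bar y=0$, which is precisely the direct arc and so gives nothing new.
\end{itemize}
So instead I will route through an intermediate vertex $w$ with $\bar x \bar w = 0$ and $\bar w \bar y = 0$. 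The natural candidate is $w=\bar y\,\bar x$, when it is nonzero. Then $\bar w = w$, because $w^2 = \bar y(\bar x\bar y)\bar x$, and I would need to verify $w^2=0$. We have $\bar x w = \bar x\bar y\bar x$ and $w\bar y=\bar y\bar x\bar y$, but these are not obviously zero.

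This is the main obstacle: proving $Diam\le 2$ in general requires a common intermediate vertex, and the arbitrary-semigroup structure may not supply one. I would first try $w=\bar y\bar x$ and then $w=\bar x\bar y$, using $\bar x^2=\bar y^2=0$ and the nilpotency of $\bar x\bar y$. If $\bar y\bar x=0$, the candidate fails, but then I would use $w=\bar x$ itself: we have $\bar x\bar x=0$ and need $\bar x \bar y=0$. So I would need to argue that $\bar x\bar y$ and $\bar y\bar x$ cannot both be nonzero with no path, possibly via $(\bar x\bar y)^k$ eventually vanishing and taking $w$ to be its last nonzero power. I expect this step to need extra care, or even a counterexample check on a small $3$-element-generated nilpotent semigroup.

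Finally, for (a) and (b), the criterion shows that $Diam=1$, i.e.\ every ordered pair of distinct vertices is joined by an arc, holds if and only if $\bar x\bar y=0$ for all distinct $x,y$. Together with $\bar x^2=0$, this is exactly $\overline{Z}(S)^2=\{0\}$. One subtlety: $x\ne y$ but $\bar x=\bar y$ is still covered by $\bar x^2=0$. Part (b) then follows from (a) and the bound $Diam\le2$ together with $|Z(S)-\{0\}|\ge2$, which forces $Diam\ge1$.
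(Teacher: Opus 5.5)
\textbf{There is a genuine gap in part 2.} Your arc criterion $x \to y \iff \bar x\,\bar y = 0$ is correct, and parts 1 and 2(a) follow from it as you describe. However, the main claim of part 2 is never established: the bound $Diam(\overrightarrow{\Gamma}_E(S)) \le 2$, and with it strong connectivity and part 2(b). You list $\bar x\bar y$ as a candidate intermediate vertex but never check it, and you even suspect a counterexample.

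The missing idea is that the obstruction itself is the intermediate vertex. If $x \to y$ is not an arc, then $w := \bar x\,\bar y \neq 0$, and
\[
\bar x\, w = x^{2n_x-2}\,\bar y = 0, \qquad w\,\bar y = \bar x\, y^{2n_y-2} = 0,
\]
because $2n_s-2 \ge n_s$ whenever $n_s \ge 2$.
\begin{itemize}
\item $w$ is a vertex: it is nonzero and $\bar x w = 0$ with $\bar x \neq 0$, so it is a zero divisor.
\item The arcs $x \to w$ and $w \to y$ hold directly from the definition, witnessed by the exponent pairs $(n_x-1,1)$ and $(1,n_y-1)$. You never need $\bar w$.
\item $w \notin \{x,y\}$: $w = x$ would give $x\bar y = w\bar y = 0$, and $w = y$ would give $\bar x y = \bar x w = 0$. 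Either equation is already an arc $x \to y$, a contradiction.
\end{itemize}
So $x \to \bar x\bar y \to y$ is a directed path of length $2$. This is exactly the paper's argument.

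Your candidate $\bar y\bar x$ fails for a structural reason. The products $\bar x(\bar y\bar x)$ and $(\bar y\bar x)\bar y$ place $\bar x$ next to $\bar y$ rather than next to itself, so nothing forces them to vanish. With $\bar x\bar y$, the vanishing squares $\bar x^2 = 0$ and $\bar y^2 = 0$ appear precisely at the two junctions. You need no appeal to the nilpotency of $\bar x\bar y$ or to its last nonzero power, and there is no counterexample to search for.

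Once this step is in place, every ordered pair of distinct vertices is joined by a directed path of length at most $2$. That gives strong connectivity and $Diam \le 2$. Your deduction of 2(b) from 2(a), $Diam \le 2$ and $Diam \ge 1$ then goes through as written.
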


\begin{proof}
	\begin{enumerate}
		\item Follows immediately from the definition of the trivial graph.
		
		\item Let $x,y\in Z(S)\setminus\{0\}$ with $x\neq y$, and suppose that there is no directed edge $x\to y$ in $\overrightarrow{\Gamma}_E(S)$. Then $x^{n_x-1}y^{n_y-1}\neq0$
		Moreover,
		$ x^{n_x-1}y^{n_y-1}\notin\{x,y\}$,
		since
		\begin{align}
			x=x^{n_x-1}y^{n_y-1}
			\Rightarrow
			xy=x^{n_x-1}y^{n_y}=0
		\end{align}
		which implies that $x\to y$ is an edge of $\overrightarrow{\Gamma}_E(S)$, a contradiction.
		
		Similarly,
		\begin{align}
			y=x^{n_x-1}y^{n_y-1}
			\Rightarrow
			xy=x^{n_x}y^{n_y-1}=0
		\end{align}
		which again implies that $x\to y$ is an edge of $\overrightarrow{\Gamma}_E(S)$, another contradiction.
		
		Hence,
		\begin{align}
			x \to x^{n_x-1}y^{n_y-1} \to y
		\end{align}
		is a path in $\overrightarrow{\Gamma}_E(S)$,
		and therefore every pair of nonadjacent vertices has a directed path of length at most $2$ between them. Consequently,
		\begin{align}
			Diam(\overrightarrow{\Gamma}_E(S))\le 2
		\end{align}
		and $\overrightarrow{\Gamma}_E(S)$ is strongly connected.
		
		\begin{enumerate}
			\item We have
			\begin{align*}
				Diam(\overrightarrow{\Gamma}_E(S))=1
				&\Longleftrightarrow
				x\to y \text{ is an edge of } \overrightarrow{\Gamma}_E(S)
				\text{ for all }x,y\in Z(S)\setminus\{0\} \\
				&\Longleftrightarrow
				x^{n_x-1}y^{n_y-1}=0
				\text{ for all }x,y\in Z(S)\setminus\{0\} \\
				&\Longleftrightarrow
				\overline{Z}(S)^2=\{0\}
			\end{align*}
			
			\item Suppose that
			$Diam(\overrightarrow{\Gamma}_E(S))=2$.
			Then there exist distinct vertices $x,y\in Z(S)\setminus\{0\}$ such that $x\to y$ is not an edge of $\overrightarrow{\Gamma}_E(S)$. Hence,
			$x^{n_x-1}y^{n_y-1}\neq0$,
			which implies that
			$\overline{Z}(S)^2\neq\{0\}$.
			
			Conversely, if
			$\overline{Z}(S)^2\neq\{0\}$,
			then there exist $x,y\in Z(S)\setminus\{0\}$ such that
			$x^{n_x-1}y^{n_y-1}\neq0$.
			Therefore, $x\to y$ is not an edge of $\overrightarrow{\Gamma}_E(S)$. Since
			$Diam(\overrightarrow{\Gamma}_E(S))\le2$,
			it follows that
			$Diam(\overrightarrow{\Gamma}_E(S))=2$.
		\end{enumerate}
	\end{enumerate}
\end{proof}

\begin{theorem}
	Let $S$ be a non-commutative semigroup with $0$ and $\overrightarrow{\Gamma}_E(S) \neq \overrightarrow{\Gamma}(S)$. Then, $\overrightarrow{\Gamma}_E(S)$ contains a cycle. 
\end{theorem}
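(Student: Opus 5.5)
The plan is to start from an arc that witnesses $\overrightarrow{\Gamma}_E(S)\neq\overrightarrow{\Gamma}(S)$ and build a closed walk from a few of its powers. I read "cycle" as in the paper's definition of a cycle in a walk, i.e.\ in the underlying graph $\overline{\Gamma}_E(S)$; the directed version is harder and is not addressed here. Both digraphs have the same vertex set and $\overrightarrow{\Gamma}(S)$ is a spanning subdigraph. So there are distinct $x,y\in Z(S)-\{0\}$ with $x\to y$ an arc of $\overrightarrow{\Gamma}_E(S)$ but not of $\overrightarrow{\Gamma}(S)$. This means $xy\neq 0$, while $x^ny^m=0$ for some $n,m\in\mathbb{Z}^+$ with $x^n,y^m\neq 0$. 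Necessarily $(n,m)\neq(1,1)$. I would fix such a pair with $n+m$ minimal.

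The key observation is that the single relation $x^ny^m=0$ gives four arcs of $\overrightarrow{\Gamma}_E(S)$ at once. These are $x\to y$, $x\to y^m$ (exponents $n,1$), $x^n\to y$ (exponents $1,m$), and $x^n\to y^m$ (exponents $1,1$, so this arc even lies in $\overrightarrow{\Gamma}(S)$). All four elements $x,y,x^n,y^m$ are nonzero zero divisors, so they are vertices. In the generic case, when these four vertices are pairwise distinct, the underlying graph contains the $4$-cycle
\[
x - y - x^n - y^m - x ,
\]
and we are done.

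The main obstacle is the degenerate cases where some of $x,y,x^n,y^m$ coincide. I would treat them one at a time.

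\begin{enumerate}
\item If $x^n=x$, then $xy^m=0$, so we may take $n=1$ and hence $m\ge 2$. Symmetrically, if $y^m=y$ we may take $m=1$ and $n\ge 2$. Both cannot happen together, since that would give $xy=0$.
\item Suppose $m=1$ and $n\ge 2$ with $x^n\neq x$. If also $x^n\neq y$, then $x,y,x^n$ are three distinct vertices with arcs $x\to y$ and $x^n\to y$. It remains to find an edge between $x$ and $x^n$. If $x\in Nil(S)$, then $x^{n_x-n}x^n=0$ with both factors nonzero, which gives $x\to x^n$ and closes a triangle. If $x\notin Nil(S)$, I would instead use the element $yx^{n-1}$ or $x^{n-1}$ together with minimality of $n$ to produce a further vertex adjacent to $y$ and to $x$.
\item Coincidences of the form $x^n=y$ or $x^n=y^m$ force relations such as $y^2=0$ or $x^{2n}=0$. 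These make $x$ or $y$ nilpotent, and then the powers of the nilpotent element give the extra edges needed, in the same way as in the proof of Theorem \ref{g}, where $x\to x^2$ was shown to be an arc when $n_x\ge 4$.
\end{enumerate}

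I expect this case analysis to be the delicate part: the generic $4$-cycle is immediate, and the work is in checking that every coincidence either contradicts the choice of a non-$\overrightarrow{\Gamma}(S)$ arc or produces a short cycle through powers of a nilpotent element.
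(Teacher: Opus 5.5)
\textbf{The proposal proves a different statement from the theorem.} You replace ``cycle in $\overrightarrow{\Gamma}_E(S)$'' by ``cycle in the underlying simple graph''. That is not the paper's notion of a cycle:
\begin{itemize}
\item its definition of a walk in a digraph requires $v_{i-1}$ to be the tail and $v_i$ the head of $e_i$;
\item Corollary \ref{gggg} asserts $Girth(\overrightarrow{\Gamma}_E(S))=2$, which is impossible for cycles in a simple undirected graph;
\item the paper's proof exhibits the directed $2$-cycle $x\to x^{n_x-1}\to x$.
\end{itemize}
Passing to the underlying graph is not a harmless weakening. A directed $2$-cycle collapses to a single edge there, and an undirected cycle need not be directed, so neither statement implies the other.

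More seriously, your central construction cannot be upgraded. The four arcs $x\to y$, $x\to y^m$, $x^n\to y$, $x^n\to y^m$ all run from $\{x,x^n\}$ to $\{y,y^m\}$. Your $4$-cycle $x-y-x^n-y^m-x$ traverses $x^n\to y$ and $x\to y^m$ against their direction, so no directed cycle comes out of it outside the degenerate coincidences. Even on your own reading the proof is unfinished: the non-nilpotent subcase of item 2 and all of item 3 are plans rather than arguments.

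\textbf{What is missing} is a pair of distinct vertices joined by arcs in both directions. For a nilpotent $z$ with $n_z\ge 3$ this is the paper's route: powers of $z$ commute, so $z\,z^{n_z-1}=0=z^{n_z-1}z$. In your item 2 you were one step away, since $x^{n_x-n}x^n=0=x^nx^{n_x-n}$ already gives $x\to x^n\to x$.

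Be aware that the paper reaches this case by negating Theorem \ref{gg}, and in doing so it drops the failure mode of a non-nilpotent $y$ with $Ann_l(y^2)\neq Ann_l(y)$ or $Ann_r(y^2)\neq Ann_r(y)$. This case really occurs with no nilpotent of index $\ge 3$. For example, the arc $\overline{10}\to\overline{3}$ of $\mathbb{Z}_{12}$ survives in a $0$-direct union of $(\mathbb{Z}_{12},\cdot)$ with a non-commutative semigroup all of whose nilpotents have index $2$. So treating a non-nilpotent $x$ separately is the right instinct, but it must be done with directed arcs.

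One way to do this:
\begin{itemize}
\item Fix $n$, take $m$ minimal with $x^ny^m=0$, and put $t=y^mx^n$. Then $t\cdot ty=0=ty\cdot t$. Also $ty=t$ would force $t=ty^m=0$, so if $ty\neq 0$ you get the directed $2$-cycle $t\to ty\to t$.
\item If $ty=0$ and $m\ge 2$, set $h=x^ny^{m-1}\neq 0$. Then $hy=0$ and $y^mh=ty^{m-1}=0$, giving $h\to y\to h$. Here $h\neq y$, since $h=y$ would give $y^m=x^ny^{2m-2}=0$.
\item If $m=1$, take $n\ge 2$ minimal with $x^ny=0$ and argue symmetrically with $xt$ and $g=x^{n-1}y$. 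Here $xt\cdot t=0=t\cdot xt$, and $xt=t$ would force $t=x^nt=0$. If $xt=0$, then $xg=0$ and $gx^n=x^{n-2}(xt)=0$, giving $x\to g\to x$.
\end{itemize}
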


\begin{proof}	
	Let $\overrightarrow{\Gamma}_E(S) \neq \overrightarrow{\Gamma}(S)$. Then, by Theorem \ref{gg}, one of the followings hold:
	
	\begin{enumerate}
		\item There exist an $x \in Nil(S)$ such that $n_x \geq 4$,
		\item There exist an $x \in Nil(S)$ such that $n_x = 3$ where $Ann_r(x^2) \setminus Ann_r (x) \neq \{x\}$ or $Ann_l(x^2) \setminus Ann_l(x) \neq \{x\}$.
	\end{enumerate}
	
	Let $x \in Nil(S)$ such that $n_x \geq 3$. Then, $x x^{n_x -1}=0$, $x^{n_x -1} \neq 0$, and $x^{n_x -1} \neq x$ since $x^{n_x -1} = x$ implies $n_x =2$. Thus, \begin{align}
		x \to x^{n_x -1} \to x
	\end{align} is a cycle in $\overrightarrow{\Gamma}_E(S)$.
\end{proof}

\begin{corollary}\label{gggg}
	Let $S$ be a non-commutative semigroup with $0$. If $S$ contains a nilpotent $x$ such that $n_x \geq 3$, then $\overrightarrow{\Gamma}_E (S)$ contains a cycle. Moreover, $Girth(\overrightarrow{\Gamma}_E (S))=2$.
\end{corollary}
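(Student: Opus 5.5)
Let $x$ be a nilpotent element of $S$ with $n_x \geq 3$, and put $w = x^{n_x-1}$. The plan is to exhibit an explicit directed cycle of length $2$ through $x$ and $w$, and then argue that no shorter cycle can exist in $\overrightarrow{\Gamma}_E(S)$.

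\textbf{Step 1: $x$ and $w$ are distinct vertices.} Both are non-zero zero divisors:
\begin{itemize}[label={}]
\item $x \neq 0$ and $x \cdot x^{n_x-1} = 0$;
\item $w \neq 0$ by minimality of $n_x$, and $w x = 0$.
\end{itemize}
So $x, w \in Z(S)-\{0\}$. They are distinct because $w = x$ would give $x^{n_x-1} = x$. Multiplying by $x$ then yields $x^{n_x} = x^2$, so $x^2 = 0$ and $n_x = 2$, contradicting $n_x \geq 3$. This is exactly the observation used in the proof of the preceding theorem.

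\textbf{Step 2: the two arcs.} The arc $x \to w$ holds in $\overrightarrow{\Gamma}_E(S)$ by taking exponents $n=m=1$:
$$x^1 w^1 = x^{n_x} = 0, \qquad x \neq 0,\ w \neq 0.$$
Symmetrically, $w \to x$ holds since $w x = x^{n_x} = 0$. (Both arcs in fact already lie in $\overrightarrow{\Gamma}(S)$.) Hence $x \to w \to x$ is a closed directed walk of length $2$ with no repeated arc and no repeated internal vertex. By the paper's definition it is a cycle, which proves the first assertion.

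\textbf{Step 3: the girth equals $2$.} This follows from the fact that the graph is simple. By definition, arcs join distinct vertices of $Z(S)-\{0\}$, as in Definition \ref{iki} and its extended analogue. So there are no loops, and no cycle of length $1$ exists. Together with the length-$2$ cycle from Step 2, this gives $Girth(\overrightarrow{\Gamma}_E(S)) = 2$.

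The main subtlety is Step 3. The definition of $\overrightarrow{\Gamma}_E(S)$ as literally stated does not exclude $x = y$. If loops were allowed, an element with $x^n x^m = 0$ would give a loop and the girth could be $1$. I would therefore state explicitly that $\overrightarrow{\Gamma}_E(S)$ is taken to be simple, consistent with $\overrightarrow{\Gamma}(S)$ being a spanning subdigraph of a simple digraph with no loops. The remaining steps are routine.
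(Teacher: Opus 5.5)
Your proof is correct and follows the paper's argument: the paper exhibits the same $2$-cycle $x \to x^{n_x-1} \to x$, using the same facts $x\,x^{n_x-1}=0$, $x^{n_x-1}\neq 0$ and $x^{n_x-1}\neq x$. Your Step 3 makes explicit what the paper leaves implicit, namely that $Girth(\overrightarrow{\Gamma}_E(S))=2$ needs $\overrightarrow{\Gamma}_E(S)$ to be loop-free; you are right that this convention is necessary, since $x^1 x^{n_x-1}=0$ would otherwise give a loop at $x$ and girth $1$.
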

\begin{proof}
	Let $x \in Nil(S)$ such that $n_x \geq 3$. Then, $x x^{n_x -1}=0$, $x^{n_x -1} \neq 0$, and $x^{n_x -1} \neq x$ since $x^{n_x -1} = x$ implies $n_x =2$. Thus, \begin{align}
		x \to x^{n_x -1} \to x
	\end{align} is a cycle in $\overrightarrow{\Gamma}_E(S)$.
\end{proof}

\begin{theorem}
	Let $Z(S)=Nil(S) \neq \{0\}$. 
	\begin{enumerate}
		\item If $|Z(S) \setminus \{0\}|=1$, then $Girth(\overrightarrow{\Gamma}_E(S))=\infty$.
		\item If $|Z(S) \setminus \{0\}| \geq 2$, then $Girth(\overrightarrow{\Gamma}_E(S))=2$.
	\end{enumerate}
\end{theorem}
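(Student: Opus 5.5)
Both parts rest on the fact that, when $Z(S)=Nil(S)$, every vertex $x$ carries the non-zero element $x^{n_x-1}$, which is annihilated on both sides by $x$ itself. We treat the trivial case first and then build a $2$-cycle in the second case.

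\textbf{Part 1.} If $|Z(S)\setminus\{0\}|=1$, the previous theorem shows that $\overrightarrow{\Gamma}_E(S)$ is the trivial graph. A trivial graph has one vertex and no arcs, and no loops are allowed since the digraph is simple. Hence it contains no cycle, and by definition $Girth(\overrightarrow{\Gamma}_E(S))=\infty$.

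\textbf{Part 2.} Assume $|Z(S)\setminus\{0\}|\geq 2$. We produce a cycle of length $2$; since the graph is simple, no cycle can be shorter, so this gives girth $2$. Pick any $x\in Z(S)\setminus\{0\}=Nil(S)\setminus\{0\}$.

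\emph{Case $n_x\geq 3$.} Corollary \ref{gggg} applies directly: $x\to x^{n_x-1}\to x$ is a cycle, because $x\cdot x^{n_x-1}=0=x^{n_x-1}\cdot x$, $x^{n_x-1}\neq 0$, and $x^{n_x-1}\neq x$.

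\emph{Case $n_x=2$ for every non-zero $x$.} This is the main obstacle, since the $2$-cycle has to be found between two distinct elements that both square to zero. Take distinct $x,y\in Z(S)\setminus\{0\}$. The only admissible exponents are $n=m=1$, so the arcs $x\to y$ and $y\to x$ exist in $\overrightarrow{\Gamma}_E(S)$ exactly when $xy=0$ and $yx=0$.
\begin{enumerate}
\item If $xy=0=yx$, then $x\to y\to x$ is the required $2$-cycle.
\item If $xy\neq 0$, set $z=xy$. We have $xz=x^2y=0$ and $zy=xy^2=0$, so $z\in Z(S)\setminus\{0\}$. Since $n_z=2$, also $z^2=0$.
\begin{enumerate}
\item If $z\neq x$ and $zx=0$, then $x\to z\to x$ is a $2$-cycle.
\item If $z=x$, then $x=xy=xy^2=0$, a contradiction; likewise $z=y$ forces $y=xy=x^2y=0$. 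So $z$ is distinct from both $x$ and $y$.
\item If $zx\neq 0$, then $w=zx=xyx$ satisfies $xw=0=wx$. Moreover $w\neq x$, since $w=x$ would give $x=x(yx)=xyx\cdot yx=x\,y(x^2)yx\,$-type products that vanish; more simply $x=xyx$ implies $x=x(yx)=(xyx)(yx)=xy(x^2)\cdots=0$ after one substitution. Then $x\to w\to x$ is a $2$-cycle.
\end{enumerate}
\item If $yx\neq 0$, the argument of the previous item applies with the roles of $x$ and $y$ interchanged.
\end{enumerate}

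The distinctness checks in the last case are where the care lies. We expect each of them to reduce, as shown, to a product containing $x^2$ or $y^2$, which vanishes, so that assuming equality forces $x$ or $y$ to be $0$.

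In every case $\overrightarrow{\Gamma}_E(S)$ contains a $2$-cycle, and therefore $Girth(\overrightarrow{\Gamma}_E(S))=2$.
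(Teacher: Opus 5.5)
Your plan works, and Part 1 and the case $n_x\ge 3$ are exactly the paper's argument. However, the step you flagged as delicate is justified incorrectly. If $x=xyx$, then $(xyx)(yx)=xyxyx$ contains no factor $x^2$ or $y^2$. So the rewriting $xy(x^2)\cdots$ is false, and so is your closing expectation that every distinctness check reduces to such a factor.

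In fact $x^2=y^2=0$ alone cannot exclude $xyx=x$. Take $B_2=\{e_{ij}\}\cup\{0\}$ with $e_{ij}e_{kl}=\delta_{jk}e_{il}$, and set $x=e_{12}$, $y=e_{21}$. Then $x^2=y^2=0$, yet $xyx=e_{11}e_{12}=x$.

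The missing ingredient is a fact you recorded but never used: $z^2=0$ for $z=xy$. This is where $Z(S)=Nil(S)$ is needed a second time. If $x=xyx$, then $z=xy=(xyx)y=z^2=0$, contradicting $xy\neq 0$. Equivalently, your product $(xyx)(yx)$ equals $z^2x=0$.

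One smaller point: phrase your two main cases as `some $x$ has $n_x\ge 3$' versus `$n_s=2$ for all $s$', rather than making them depend on the first chosen $x$. With these two repairs the proof is complete.

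The paper handles the case where every element squares to zero with a different split: by whether $xy$ and $yx$ vanish and whether they coincide. This gives the $2$-cycles $x\to y\to x$, $x\to yx\to x$, $xy\to yx\to xy$ (via $(xy)(yx)=xy^2x=0$ and $(yx)(xy)=yx^2y=0$), and $x\to xy\to x$. That route uses only $x^2=y^2=0$. Hence any two distinct non-zero square-zero elements already produce a $2$-cycle, even in $\overrightarrow{\Gamma}(S)$; in $B_2$ it finds $e_{11}\to e_{22}\to e_{11}$.

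Your route always anchors the cycle at $x$, with partner $xy$ or $xyx$. That is more uniform, but, as the $B_2$ example shows, it genuinely depends on $xy$ being nilpotent as well.
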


\begin{proof}
	\begin{enumerate}
		\item Since the graph has a single vertex and no edges, it contains no cycles. Hence,
		\begin{align}
			Girth(\overrightarrow{\Gamma}_E(S))=\infty.
		\end{align}
		
		\item Let $|Z(S) \setminus \{0\}| \geq 2$.
		\begin{enumerate}
			\item If there exists $x \in Z(S) \setminus \{0\}$ such that $n_x \geq 3$, then, by Corollary \ref{gggg}, the graph $\overrightarrow{\Gamma}_E(S)$ contains a cycle of length $2$.
			
			\item Assume that $n_s = 2$ for every $s \in Z(S) \setminus \{0\}$, and let $x,y \in Z(S) \setminus \{0\}$ be distinct. Then $x^2 = 0$ and $y^2 = 0$. Moreover,
			\begin{align}
				x = xy &\Rightarrow x = (xy)y = xy^2 =0 \label{316}\\
				x = yx &\Rightarrow x = y(yx) = xy^2 =0 \label{317}\\
				y = xy &\Rightarrow y = x(xy) = x^2y =0 \label{318} \\
				y = yx &\Rightarrow y = (yx)x = yx^2 = 0 \label{319}
			\end{align}
			Eqs.~\eqref{316}--\eqref{319} contradict that $x \neq0$ and $y \neq 0$ thus, $x \neq xy$, $x \neq yx$, $y \neq xy$, and $y \neq yx$ is obtained.
			
			\begin{enumerate}
				\item If $xy = yx =0$, then
				\begin{align}
					x \to y \to x
				\end{align}
				is a cycle in the digraph $\overrightarrow{\Gamma}_E(S)$.
				
				\item If $xy = 0$ and $yx \neq 0$, then
				\begin{align}
					x(yx) = (xy)x = 0 \quad \text{and} \quad (yx)x = yx^2 = 0.
				\end{align}
				Hence,
				\begin{align}
					x \to yx \to x
				\end{align}
				is a cycle in the digraph $\overrightarrow{\Gamma}_E(S)$.
				
				\item The case where $xy \neq 0$ and $yx = 0$ follows similarly.
				
				\item If $xy \neq 0$ and $yx \neq 0$, then
				\begin{enumerate}
					\item If $xy \neq yx$, then
					\begin{align}
						(xy)(yx) = xy^2x = 0 \quad \text{and} \quad (yx)(xy)=yx^2y=0,
					\end{align}
					so
					\begin{align}
						xy \to yx \to xy
					\end{align}
					is a cycle in the digraph $\overrightarrow{\Gamma}_E(S)$.
					
					\item If $xy = yx$, then
					\begin{align}
						x(xy)=x^2y=0 \quad \text{and} \quad x(yx)=x(xy)=x^2y = 0,
					\end{align}
					so
					\begin{align}
						x \to xy \to x
					\end{align}
					is a cycle in the digraph $\overrightarrow{\Gamma}_E(S)$.
				\end{enumerate}
			\end{enumerate}
		\end{enumerate}
	\end{enumerate}
\end{proof}

\section{Conclusion}
In this paper, $\overrightarrow{\Gamma}_E(S)$ for a semigroup $S$ is introduced and studied. $\overrightarrow{\Gamma}(S) \subseteq \overrightarrow{\Gamma}_E(S)$ is proved. The necessary and sufficient conditions for which $\overrightarrow{\Gamma}_E (S) = \overrightarrow{\Gamma}(S)$ are established. Properties of vertices that are adjacent to and adjacent from all other vertices are determined. The conditions in which $Diam(\overrightarrow{\Gamma}_E(S)) \leq Diam(\overrightarrow{\Gamma}(S))$ and $Girth(\overrightarrow{\Gamma}_E(S)) \leq Girth(\overrightarrow{\Gamma}(S))$ are identified by means of the existing properties of such vertices.

\section*{Acknowledgements}
This paper is derived from the first author's master's thesis supervised by the second author. The third author contributed to the development of further results and to the writing of the paper. All authors read and approved the final version of the paper.

\bibliographystyle{plain} 

\end{document}